\newcommand{\ol}{\overline}
\newcommand{\wh}{\widehat}
\newcommand{\wt}{\widetilde}
\newcommand{\coleq}{\mathrel{\mathop:}=}
\newcommand{\LL}{L(\Omega)}
\newcommand{\mquad}{\mkern-18mu}
\newcommand{\mqquad}{\mkern-36mu}
\theoremstyle{definition}
\newtheorem{definition}{Definition}[section]
\newtheorem{assumption}[definition]{Assumption}
\theoremstyle{plain}
\newtheorem{lemma}[definition]{Lemma}
\newtheorem{proposition}[definition]{Proposition}
\newtheorem{theorem}[definition]{Theorem}
\newtheorem{corollary}[definition]{Corollary}
\begin{document}
\title[Trend to equilibrium for RDD--Poisson models]{Uniform convergence to equilibrium for a family of drift--diffusion models with trap-assisted recombination 
and self-consistent potential}

\author{Klemens Fellner}
\address{Institute of Mathematics and Scientific Computing, University of Graz, Heinrichstra\ss e 36, 8010 Graz, Austria}
\email{klemens.fellner@uni-graz.at}

\author{Michael Kniely}
\address{Faculty of Mathematics, TU Dortmund University, Vogelpothsweg 87, 44227 Dortmund, Germany}
\email[Corresponding author]{michael.kniely@tu-dortmund.de}


\keywords{PDEs in connection with semiconductor devices, reaction--diffusion equations, self-consistent potential, trapped states, entropy method, exponential convergence to equilibrium}

\subjclass[2020]{Primary 35Q81; Secondary 78A35, 35B40, 35K57}

\begin{abstract}
We investigate a recombination--drift--diffusion model coupled to Poisson's equation modelling the transport of charge within certain types of semiconductors. In more detail, we study a two-level system for electrons and holes endowed with an intermediate energy level for electrons occupying trapped states. As our main result, we establish an explicit functional inequality between relative entropy and entropy production, which leads to exponential convergence to equilibrium. We stress that our approach is applied uniformly in the lifetime of electrons on the trap level assuming that this lifetime is sufficiently small. 
\end{abstract}

\maketitle


\section{Introduction and main results}

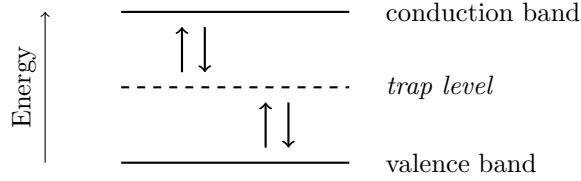
\begin{figure}[t]
\label{figmodel}
\centering
\begin{tikzpicture}
\draw[->] (-1,0) -- node[sloped, anchor=south]{Energy} (-1,2);
\draw[thick] (0,0) -- (3,0) node[anchor=west]{\quad valence band};
\draw[thick, dashed] (0,1) -- (3,1) node[anchor=west]{\quad \textit{trap level}};
\draw[thick] (0,2) -- (3,2) node[anchor=west]{\quad conduction band};
\draw[thick, ->] (.8, 1.2) -- (.8, 1.8);
\draw[thick, <-] (1.1, 1.2) -- (1.1, 1.8);
\draw[thick, ->] (1.9, .2) -- (1.9, .8);
\draw[thick, <-] (2.2, .2) -- (2.2, .8);
\end{tikzpicture}
\caption{An illustration of the allowed transitions of electrons between the three energy levels.}
\end{figure}

We consider the following PDE--ODE recombination--drift--diffusion system coupled to Poisson's equation on a bounded domain $\Omega \subset \mathbb{R}^3$ with boundary $\partial \Omega \in C^2$: 
\begin{equation}
\label{eqsystem}
\begin{cases}
\begin{aligned}
\partial_t n &= \nabla \cdot J_n(n, \psi) + R_n(n,n_{tr}), \\
\partial_t p &= \nabla \cdot J_p(p, \psi) + R_p(p,n_{tr}), \\
\varepsilon \, \partial_t n_{tr} &= R_p(p, n_{tr}) - R_n(n, n_{tr}), \\
- \lambda \, \Delta \psi &= n - p + \varepsilon n_{tr} - D,
\end{aligned}
\end{cases}
\end{equation}
where the flux terms $J_n$, $J_p$ and recombination terms $R_n$, $R_p$ are defined as
\begin{align*}
& & J_n &:= \nabla n + n\nabla (\psi + V_n) = \mu_n \nabla \frac{n}{\mu_n} + n \nabla \psi,  & \mu_n &:= e^{-V_n}, & & \\
& & J_p &:= \nabla p + p\nabla (-\psi + V_p) = \mu_p \nabla \frac{p}{\mu_p} - p \nabla \psi, & \mu_p &:= e^{-V_p}, & &
\end{align*}
\[
R_n := \frac{1}{\tau_n} \left( n_{tr} - \frac{n}{n_0 \mu_n} (1 - n_{tr}) \right), \qquad
R_p := \frac{1}{\tau_p} \left( 1 - n_{tr} - \frac{p}{p_0 \mu_p} n_{tr} \right).
\]

The variables $n$, $p$, and $n_{tr}$ denote the densities of electrons in the conduction band, holes in the valence band, and electrons on the trap level (see Fig.\ \ref{figmodel}). Moreover, $\psi$ represents the electrostatic potential generated by $n$, $p$, $n_{tr}$, and the time-independent doping profile $D \in L^{\infty}(\Omega)$. The constants $n_0, p_0, \tau_n, \tau_p>0$ are positive recombination parameters, while $\varepsilon\in(0,\varepsilon_0]$ (for arbitrary but fixed $\varepsilon_0>0$) is a dimensionless quantity which can be interpreted, on the one hand, as the density of available trapped states and, on the other hand, as the lifetime of electrons on the trap level. 
Note that system \eqref{eqsystem} reduces in the limit $\varepsilon=0$ to the famous Shockley--Read--Hall model of electron recombination \cite{SR52,H52} in semiconductor drift--diffusion systems, see e.g. \cite{MRS90, GMS07}.
Finally,
\begin{align}
\label{eqdefpot}
V_n, \, V_p \in W^{2,\infty}(\Omega) \qquad \mbox{with} \qquad \hat n \cdot \nabla V_n = \hat n \cdot \nabla V_p = 0 \quad \mbox{on} \ \partial \Omega 
\end{align}
represent external time-independent potentials.

The system is equipped with no-flux boundary conditions for the flux components and homogeneous Neumann boundary data for the electrostatic potential,
\begin{equation}
\label{eqboundaryconditions}
\hat n \cdot J_n = \hat n \cdot J_p = \hat n \cdot \nabla \psi = 0 \quad \mbox{on} \ \partial \Omega
\end{equation}
where $\hat n$ represents the outer unit normal vector on $\partial \Omega$. As we will use a result in \cite{GMS07} for proving existence and uniqueness of global solutions, we choose the initial conditions $n(0, \cdot) = n_I$, $p(0, \cdot) = p_I$, and $n_{tr}(0, \cdot) = n_{tr,I}$ in accordance with \cite{GMS07}:
\begin{equation}
\label{eqinitialconditions}
n_I, p_I \in H^1(\Omega) \cap L^\infty(\Omega), \quad n_I, p_I \geq 0, \quad 0 \leq n_{tr,I} \leq 1.
\end{equation}
For convenience, we assume that the volume of $\Omega$ is normalised, i.e. $|\Omega| = 1$, and we set 
\[
\ol{f} := \int_{\Omega} f(x) \, dx
\]
for any function $f \in L^1(\Omega)$. This abbreviation is consistent with the usual definition of the average of $f$ since $|\Omega|=1$.

The main goal of the paper is to close the gap between the models investigated in \cite{FK18} and \cite{FK20}. While the pure Shockley--Read--Hall model including the electrostatic potential has been considered in \cite{FK18}, the family of drift--diffusion models with trap-assisted recombination already appeared in \cite{FK20} but without coupling to Poisson's equation. Here, we focus on the exponential convergence to equilibrium for a PDE--ODE model including both trapped states dynamics and the self-consistent potential. More precisely, we obtain an explicit bound for the convergence rate by employing the so-called \emph{entropy method} which amounts to deriving a functional inequality between an entropy functional and the associated entropy production. For related works on exponential convergence to equilibrium for reaction--diffusion systems, see e.g.\ \cite{AMT00,DFT17_ComplexBalance,FT17_DetailBalance,FT18_ConvergenceOfRenormalizedSolutions,MHM15}. The framework of the entropy method which we shall use here to obtain explicit bounds on the convergence rate originates from \cite{DF06,DF08,DF14}, where models from reversible chemistry have been studied. An earlier application of the entropy method, but using a non-constructive compactness argument, is presented in \cite{GGH96,GH97}, where the authors prove exponential convergence for a model of electrically charged species taking the coupling to Poisson's equation into account.

Throughout the article, we will frequently encounter the following inhomogeneous Poisson equation with right hand side $f \in L^2(\Omega)$ subject to homogeneous Neumann boundary conditions: 
\begin{equation}
\label{eqpsi}
-\lambda \, \Delta\psi = f \quad \mbox{in} \ \Omega, \qquad
\hat n \cdot \nabla \psi = 0 \quad \mbox{on} \ \partial \Omega.
\end{equation}
It is well-known that there exists a weak solution $\psi \in H^1(\Omega)$ if and only if $\ol f = 0$ holds true (compatibility condition with homogeneous Neumann boundary data). 
In this case, $\psi$ is determined only up to an additive constant, which one can fix via the normalisation $\ol \psi = 0$ to obtain a unique solution $\psi$.

Due to the previous considerations, 
we additionally have to demand that the initial data satisfy the charge-neutrality condition 
\begin{equation}
\label{eqinitialconservation}
\int_\Omega \big( n_I - p_I + \varepsilon n_{tr,I} - D \big) dx = 0.
\end{equation}
As a consequence of the structure of system \eqref{eqsystem} and the no-flux boundary conditions in \eqref{eqboundaryconditions}, we see that the total charge is preserved for all $t \geq 0$ in the sense that
\begin{equation}
\label{eqconservationlaw}
\int_\Omega \big( n(t, \cdot) - p(t, \cdot) + \varepsilon n_{tr}(t, \cdot) \big) \, dx = \int_\Omega \big( n_I - p_I + \varepsilon n_{tr,I} \big) \, dx = \int_\Omega D \, dx.
\end{equation}

For the sake of completeness, we subsequently recall all assumptions on our model referring to the introduction above for further details and modelling issues.

\begin{assumption}
\label{assump}
We work on a bounded domain $\Omega \subset \mathbb R^3$ with boundary $\partial \Omega$ of class $C^2$ imposing the following constraints: 
\begin{itemize}
	\item $n_0$, $p_0$, $\tau_n$, $\tau_p$ are positive constants, while $\varepsilon \in (0, \varepsilon_0]$ is bounded by $\varepsilon_0 > 0$,
	\item $D \in L^\infty(\Omega)$ and $V_n, \, V_p \in W^{2,\infty}(\Omega)$ with $\hat n \cdot \nabla V_n = \hat n \cdot \nabla V_p = 0$ on $\partial \Omega$ where $\hat n$ represents the outer unit normal vector on $\partial \Omega$,
	\item $\hat n \cdot J_n = \hat n \cdot J_p = \hat n \cdot \nabla \psi = 0$ on $\partial \Omega$,
	\item $n_I, p_I \in H^1(\Omega) \cap L^\infty(\Omega)$, $n_I, p_I \geq 0$, $0 \leq n_{tr,I} \leq 1$, and \eqref{eqinitialconservation} holds.
\end{itemize}
\end{assumption}

Throughout this article, we suppose Assumption \ref{assump} to hold.

\begin{definition}
A \emph{global weak solution} to \eqref{eqsystem}--\eqref{eqinitialconditions} and \eqref{eqinitialconservation} is a quadruple $(n, p, n_{tr}, \psi) : [0, \infty) \rightarrow H^1(\Omega)^2 \times L^\infty(\Omega) \times H^1(\Omega)$ such that for all $T > 0$ the following conditions are satisfied:
\begin{itemize}
	\item $n, p \in L^2(0, T; H^1(\Omega))$, and
\begin{multline*}
- \int_0^T \langle u'(t), n(t) \rangle_{H^1(\Omega)^\ast \times H^1(\Omega)} dt - \int_\Omega n_I u(0) \, dx \\
= - \int_0^T \int_\Omega J_n(n, \psi) \cdot \nabla u \, dx \, dt + \int_0^T \int_\Omega R_n(n, n_{tr}) u \, dx \, dt,
\end{multline*}
\begin{multline*}
- \int_0^T \langle u'(t), p(t) \rangle_{H^1(\Omega)^\ast \times H^1(\Omega)} dt - \int_\Omega p_I u(0) \, dx \\
= - \int_0^T \int_\Omega J_p(p, \psi) \cdot \nabla u \, dx \, dt + \int_0^T \int_\Omega R_p(p, n_{tr}) u \, dx \, dt,
\end{multline*}
for all $u \in W_2(0, T) \coleq \big\{ f \in L^2(0, T; H^1(\Omega)) \, | \, \partial_t f \in L^2(0, T; H^1(\Omega)^*) \big\}$ subject to $u(T) = 0$,
\item $\begin{aligned}n_{tr}(T) = n_{tr,I} + \frac{1}{\varepsilon} \int_0^T \big( R_p(p, n_{tr}) - R_n(n, n_{tr}) \big) dt\end{aligned}$,
\item 
$\begin{aligned}\lambda \int_\Omega \nabla \psi(T) \cdot \nabla w \, dx = \int_\Omega \big( n(T) - p(T) + \varepsilon n_{tr}(T) - D \big) w \, dx\end{aligned}$
for all $w \in H^1(\Omega)$.
\end{itemize}
\end{definition}

We further mention the embedding $W_2(0, T) \hookrightarrow C([0, T], L^2(\Omega))$ known from PDE theory (see e.g.\ \cite{Chi00}).

\begin{proposition}[Global Solutions]
\label{propglobalsolution}
There exists a unique global weak solution $(n, p, n_{tr}, \psi) : [0, \infty) \rightarrow H^1(\Omega)^2 \times L^\infty(\Omega) \times H^1(\Omega)$ of \eqref{eqsystem}--\eqref{eqinitialconditions} and \eqref{eqinitialconservation} with $\ol {\psi(t, \cdot)} = 0$ for all $t \geq 0$. This solution satisfies $n, p \in L^2(0,T; H^1(\Omega))$ for all $T>0$ uniformly in $\varepsilon \in (0, \varepsilon_0]$ as well as \eqref{eqconservationlaw}. Moreover, $n, p \geq 0$, $0 \leq n_{tr} \leq 1$, and there exist positive constants $M$, $K(M)$ (again uniformly in $\varepsilon \in (0, \varepsilon_0]$) such that
\begin{align}
\label{equpperboundnp}
\| n(t) \|_{L^\infty(\Omega)} + \| p(t) \|_{L^\infty(\Omega)} \leq M \quad \mbox{and} \quad \| \psi(t) \|_{H^2(\Omega)} + \| \psi(t) \|_{C(\ol \Omega)} \leq K
\end{align}
for all $t \geq 0$. In addition, there exists a positive constant $\mu(M, K) < \tfrac12$ (uniformly in $\varepsilon \in (0, \varepsilon_0]$) such that 
\begin{align}
\label{eqlowerboundnp}
n(t, x), \, p(t,x) \geq \mu \min \big\{ t^2, 1 \big\}
\end{align}
and
\begin{align}
\label{eqlowerupperboundntr}
n_{tr}(t, x) \in \big[ \min \big\{ \tfrac{1}{2\varepsilon_0\tau_p} t, \mu \big\}, \, 1 - \min \big\{ \tfrac{1}{2\varepsilon_0\tau_n} t, \mu \big\} \big]
\end{align}
for all $t \geq 0$ and a.e.\ $x \in \Omega$. 
Finally, $n, p \in W_2(0, T) \hookrightarrow C([0, T], L^2(\Omega))$, $n_{tr} \in C([0, T], L^\infty(\Omega))$, and $\psi \in C([0, T], H^2(\Omega))$, where each inclusion holds true uniformly in $\varepsilon \in (0, \varepsilon_0]$.
\end{proposition}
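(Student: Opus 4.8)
The plan is to obtain existence, uniqueness and the qualitative bounds \eqref{equpperboundnp} from the drift--diffusion--Poisson theory of \cite{GMS07} applied for each fixed $\varepsilon\in(0,\varepsilon_0]$, and then to establish the quantitative lower bounds \eqref{eqlowerboundnp}--\eqref{eqlowerupperboundntr} together with the $\varepsilon$-uniformity of all constants by separate arguments along the lines of \cite{FK18,FK20}. System \eqref{eqsystem}--\eqref{eqinitialconditions} fits the framework of \cite{GMS07}: $J_n$ and $J_p$ have the usual drift--diffusion form with $V_n,V_p\in W^{2,\infty}(\Omega)$ satisfying \eqref{eqdefpot}, the recombination terms are affine in $n_{tr}$ with $(n,p)$-Lipschitz, quasi-positive coefficients, and \eqref{eqpsi} carries the bounded source $n-p+\varepsilon n_{tr}-D$. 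First I would set up a fixed-point scheme as in \cite{GMS07} --- iterating on $(n,p)$ and inserting, at each step, the solution of the spatially decoupled affine ODE for $n_{tr}$ and the unique normalised solution of \eqref{eqpsi} --- to construct the unique global weak solution; non-negativity of $n$ and $p$ then follows by testing the $n$- and $p$-equations with $n_-$ and $p_-$, using $R_n|_{n=0}=\tfrac1{\tau_n}n_{tr}\ge0$ and $R_p|_{p=0}=\tfrac1{\tau_p}(1-n_{tr})\ge0$, while $0\le n_{tr}\le1$ is the invariant region of the $n_{tr}$-ODE because $(R_p-R_n)|_{n_{tr}=0}=\tfrac1{\tau_p}+\tfrac{n}{\tau_n n_0\mu_n}\ge0$ and $(R_p-R_n)|_{n_{tr}=1}=-\tfrac{p}{\tau_p p_0\mu_p}-\tfrac1{\tau_n}\le0$, and the conservation law \eqref{eqconservationlaw} results from using $u\equiv1$ in the weak formulation. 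The $L^\infty$ bound $M$ on $n,p$ I would obtain from a De Giorgi/Moser iteration (a bounded constant is not a supersolution because of the self-consistent drift); $L^p$-elliptic regularity for \eqref{eqpsi} with $L^\infty$ right-hand side then gives $\psi\in H^2(\Omega)\cap C(\ol\Omega)$ with the bound $K$ together with a uniform $L^\infty$-bound on $\nabla\psi$, and a standard energy estimate (testing with $n,p$ and using these bounds) yields the $L^2(0,T;H^1(\Omega))$-bound. Since $\varepsilon$ enters all of these PDE estimates only through the term $\varepsilon n_{tr}\in[0,\varepsilon_0]$ in \eqref{eqpsi}, and $0\le n_{tr}\le1$ holds irrespective of $\varepsilon$, all of $M$, $K$ and the $L^2(0,T;H^1)$-bound can be taken uniform in $\varepsilon\in(0,\varepsilon_0]$.

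For the lower bound on $n_{tr}$ I would fix $\mu\in(0,\tfrac12)$ so small --- depending only on $M$ and the fixed constants --- that, for all $0\le n,p\le M$, one has $R_p-R_n\ge\tfrac1{2\tau_p}$ whenever $0\le n_{tr}\le\mu$ and $R_n-R_p\ge\tfrac1{2\tau_n}$ whenever $1-\mu\le n_{tr}\le1$; this is possible because, e.g., $R_p-R_n\ge\tfrac1{\tau_p}(1-n_{tr})-\bigl(\tfrac{M}{\tau_p p_0\underline\mu_p}+\tfrac1{\tau_n}\bigr)n_{tr}$ with $\underline\mu_p:=e^{-\|V_p\|_{L^\infty(\Omega)}}\le\mu_p$. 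Because $\varepsilon\le\varepsilon_0$, this gives $\partial_t n_{tr}=\tfrac1\varepsilon(R_p-R_n)\ge\tfrac1{2\varepsilon_0\tau_p}$ pointwise as long as $n_{tr}\le\mu$; integrating from $n_{tr,I}\ge0$ and invoking a barrier argument at the level $n_{tr}=\mu$ (where $\partial_t n_{tr}>0$, so $n_{tr}$ cannot re-cross it from above) yields $n_{tr}(t,x)\ge\min\{\tfrac1{2\varepsilon_0\tau_p}t,\mu\}$. The bound $n_{tr}(t,x)\le1-\min\{\tfrac1{2\varepsilon_0\tau_n}t,\mu\}$ follows symmetrically from the second inequality above.

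For the lower bounds on $n$ and $p$ I would set $T_1:=2\varepsilon_0\tau_p\mu$. Writing $\underline\mu_n:=e^{-\|V_n\|_{L^\infty(\Omega)}}\le\mu_n$ and $a:=\tfrac1{\tau_n n_0\underline\mu_n}$, one checks $R_n+an\ge\tfrac1{\tau_n}n_{tr}$, so $n$ is a weak supersolution --- with $u(0)=n_I\ge0$ --- of the linear no-flux problem $\partial_t u-\nabla\cdot(\nabla u+u\nabla(\psi+V_n))+au=\tfrac1{\tau_n}\min\{\tfrac1{2\varepsilon_0\tau_p}t,\mu\}$, whose right-hand side equals $\tfrac1{2\varepsilon_0\tau_n\tau_p}t$ on $[0,T_1]$ and the constant $\tfrac\mu{\tau_n}$ for $t\ge T_1$. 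Here the conservative operator $u\mapsto\nabla\cdot(\nabla u+u\nabla(\psi+V_n))$ generates a positivity-preserving no-flux semigroup with strictly positive stationary state $e^{-(\psi+V_n)}$, which --- thanks to $\|\psi\|_{C(\ol\Omega)}\le K$ and $V_n\in W^{2,\infty}(\Omega)$ --- is bounded above and below by $\varepsilon$-uniform positive constants; hence that semigroup maps the constant $1$ to a function bounded below by an $\varepsilon$-uniform positive constant at all times. Comparing $n$ with the solution of the linear problem started from $0$ and applying Duhamel's formula then yields $n(t,x)\ge c't^2$ on $[0,T_1]$, the constant source for $t\ge T_1$ gives $n(t,x)\ge\mu_\ast>0$ for $t\ge T_1+1$, and positivity preservation propagates the pointwise bound $n(T_1,\cdot)\ge c'T_1^2$ across $[T_1,T_1+1]$. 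Combining the three regimes and shrinking $\mu$ once more gives $n(t,x)\ge\mu\min\{t^2,1\}$; the bound for $p$ is entirely analogous, with $R_p$, $\underline\mu_p$ and $1-n_{tr}\ge\min\{\tfrac1{2\varepsilon_0\tau_n}t,\mu\}$ in place of their counterparts.

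The time-continuity statements follow from the weak formulation: $n\le M$ and the $L^2(0,T;H^1)$-bound give $J_n\in L^2(0,T;L^2(\Omega))$ and $R_n$ bounded, hence $\partial_t n\in L^2(0,T;H^1(\Omega)^\ast)$ and $n\in W_2(0,T)\hookrightarrow C([0,T],L^2(\Omega))$ (likewise for $p$); the ODE representation with bounded $R_p-R_n$ gives $n_{tr}\in C([0,T],L^\infty(\Omega))$; and continuous dependence for \eqref{eqpsi}, with $n,p\in C([0,T],L^2(\Omega))$ and $0\le\varepsilon n_{tr}\le\varepsilon_0$, gives $\psi\in C([0,T],H^2(\Omega))$ --- all norms involved being $\varepsilon$-uniform. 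The hard part will not be any single existence step but rather the $\varepsilon$-uniformity of every constant: the De Giorgi iteration for $M$ must not acquire $\varepsilon$-dependence through the self-consistent drift, and the explicit rate in \eqref{eqlowerupperboundntr} --- which reflects the worst case $\varepsilon=\varepsilon_0$ --- must be tracked carefully through the ODE barrier argument; the second delicate point is the coupling near $t=0$, whereby the linear-in-$t$ growth of $n_{tr}$ feeds, via one further time integration in the parabolic equations, the quadratic lower bound \eqref{eqlowerboundnp} for $n$ and $p$.
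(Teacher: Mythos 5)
Your overall architecture matches the paper's: existence, uniqueness, $n,p\in L^2(0,T;H^1(\Omega))$ and $0\le n_{tr}\le 1$ from \cite{GMS07}, an $L^r$-iteration for the uniform $L^\infty$ bound, elliptic regularity for $\psi$, an ODE barrier for $n_{tr}$, and a comparison argument for the lower bounds on $n$ and $p$. Two steps, however, are not carried by the reasons you give. First, the $L^\infty$ bound: you correctly flag the self-consistent drift as the difficulty, but you do not supply the idea that defuses it. Testing the $n$-equation with $n^r$ produces the term $r\int_\Omega n^{r+1}\Delta(\psi+V_n)\,dx$, and $\Delta\psi=-\lambda^{-1}(n-p+\varepsilon n_{tr}-D)$ is controlled only by the very $L^\infty$ norms you are trying to establish, so a De~Giorgi/Moser iteration run on $n$ (or on $n$ and $p$ separately) is circular --- the drift term is superlinear in the unknown. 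The paper's iteration works only because it runs on $\int_\Omega(n^{r+1}+p^{r+1})\,dx$ \emph{jointly} and exploits, after substituting Poisson's equation, the sign of $-\lambda^{-1}r\int_\Omega(n^{r+1}-p^{r+1})(n-p)\,dx\le 0$, which lets one discard the dangerous contribution and leaves only terms linear in $\int_\Omega(n^{r+1}+p^{r+1})\,dx$. Without this cancellation the step fails, and with it the $\varepsilon$-uniformity is automatic since $\varepsilon n_{tr}\in[0,\varepsilon_0]$.

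Second, the lower bounds \eqref{eqlowerboundnp}. Your route (supersolution of a linear no-flux problem plus Duhamel) genuinely differs from the paper, which tests $\partial_t n\ge\nabla\cdot(\nabla n+n\nabla(\psi+V_n))+n_{tr}-\alpha n$ with $(n-\mu_1t^2)_-$ on $[0,t_0]$ and with $(n-\mu_2)_-$ afterwards and closes with Gronwall. Your route is viable, but the justification of its key quantitative input --- that the evolution family maps $\mathbb{1}$ to a function with an $\varepsilon$-uniform positive lower bound --- is wrong as stated: $e^{-(\psi(t,\cdot)+V_n)}$ is \emph{not} a stationary state of the non-autonomous operator $u\mapsto\nabla\cdot(\nabla u+u\nabla(\psi(t)+V_n))$, since $\partial_t\psi\neq 0$; it only lies in the kernel of the frozen-time operator, and no bound on $\partial_t\psi$ is available to control the discrepancy. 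What saves the argument (and is precisely the quantity the paper's Gronwall constant is built on) is the cruder minimum principle: $\|\Delta(\psi+V_n)\|_{L^\infty(\Omega)}$ is uniformly bounded (note that $\Delta\psi\in L^\infty(\Omega)$ comes directly from Poisson's equation, not from $H^2$ regularity), whence the evolution family satisfies $S(t,s)\mathbb{1}\ge e^{-C(t-s)}$, which suffices on the bounded interval $[0,t_0]$; for $t\ge t_0$ one compares with a constant $c$ satisfying $c\,(\alpha+\|\Delta(\psi+V_n)\|_{L^\infty(\Omega)})\le\tfrac{1}{2\rho}$, which is an admissible subsolution because constants satisfy the no-flux condition thanks to $\hat n\cdot\nabla\psi=\hat n\cdot\nabla V_n=0$. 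With these two repairs your proposal is sound; the remaining steps (barrier for $n_{tr}$, conservation law, $W_2(0,T)$ and continuity statements) coincide with the paper's.
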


\begin{proposition}[Equilibrium States]
\label{propequilibrium}
The stationary system
\begin{equation}
\label{eqeqsystem}
\begin{cases}
\begin{aligned}
\nabla \cdot J_n(n) + R_n(n,n_{tr}) = 0,& \\
\nabla \cdot J_p(p) + R_p(p,n_{tr}) = 0,& \\
R_n(n, n_{tr}) = R_p(p, \, n_{tr}),& \\
- \lambda \, \Delta \psi = n - p + \varepsilon n_{tr} - D,&
\end{aligned}
\end{cases}
\end{equation}
subject to $\hat n \cdot J_n = \hat n \cdot J_p = \hat n \cdot \nabla \psi = 0$ 
admits a unique solution $(n_\infty, p_\infty, n_{tr,\infty}, \psi_\infty) \in (H^1(\Omega) \cap L^\infty(\Omega))^4$ satisfying $\ol {\psi_\infty} = 0$. The equilibrium potential $\psi_\infty$ is continuous and there exists a positive constant $K_\infty$ such that 
\[
\| \psi_\infty \|_{H^2(\Omega)} + \| \psi_\infty \|_{C(\ol \Omega)} \leq K_\infty.
\]
Moreover, there exist positive constants $M_\infty(K_\infty) > 1$ and $\mu_\infty(K_\infty) < \tfrac12$ such that
\begin{equation}
\label{eqeqbounds}
n_\infty(x), \, p_\infty(x) \in (\mu_\infty, M_\infty) \quad \mbox{and} \quad n_{tr,\infty}(x) \in (\mu_\infty, 1 - \mu_\infty)
\end{equation}
for a.e.\ $x \in \Omega$. The constants $K_\infty$, $M_\infty$, and $\mu_\infty$ are independent of $\varepsilon \in (0, \varepsilon_0]$. 
In detail, the equilibrium densities $n_\infty$, $p_\infty$, and $n_{tr,\infty}$ read 
\begin{equation}
\label{eqeqstates}
n_\infty = n_\ast e^{-\psi_\infty - V_n}, \quad p_\infty = p_\ast e^{\psi_\infty - V_p}, \quad n_{tr,\infty} = \frac{n_\ast}{n_\ast + n_0 e^{\psi_\infty}} = \frac{p_0}{p_0 + p_\ast e^{\psi_\infty}}
\end{equation}
where the positive constants $n_\ast$ and $p_\ast$ are uniquely determined in terms of $\psi_\infty$ by 
\begin{equation}
\label{eqeqconstants}
n_\ast p_\ast = n_0 p_0 \quad \mbox{and} \quad n_\ast \ol{e^{-\psi_\infty - V_n}} - p_\ast \ol{e^{\psi_\infty - V_p}} + \varepsilon \ol{\frac{n_\ast}{n_\ast + n_0 e^{\psi_\infty}}} - \ol D = 0.
\end{equation}
Furthermore, the following relations hold true: 
\begin{equation}
\label{eqntrrelations}
n_{tr,\infty} = \frac{n_\infty (1 - n_{tr,\infty})}{n_0 \mu_n} \quad \mbox{and} \quad 1 - n_{tr,\infty} = \frac{p_\infty n_{tr,\infty}}{p_0 \mu_p}.
\end{equation}
\end{proposition}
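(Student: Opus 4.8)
The proof naturally splits into four parts: a direct verification that the densities in \eqref{eqeqstates}--\eqref{eqeqconstants} solve \eqref{eqeqsystem} (sufficiency), an argument that every stationary solution must be of this form (necessity), the solvability and uniqueness of the resulting scalar nonlinear Poisson problem once the gauge $\ol{\psi_\infty}=0$ is fixed, and finally the extraction of the $\varepsilon$-uniform bounds. \emph{Sufficiency} is immediate: for $n_\infty,p_\infty$ as in \eqref{eqeqstates} one has $J_n(n_\infty)=n_\infty\nabla(\ln(n_\infty/\mu_n)+\psi_\infty)=0$ and likewise $J_p(p_\infty)=0$, so the first two equations of \eqref{eqeqsystem} reduce to $R_n(n_\infty,n_{tr,\infty})=R_p(p_\infty,n_{tr,\infty})=0$, which are precisely the relations \eqref{eqntrrelations}; the identity $n_\ast p_\ast=n_0p_0$ makes the two expressions for $n_{tr,\infty}$ in \eqref{eqeqstates} consistent, and the second relation in \eqref{eqeqconstants} is the zero-mean compatibility condition under which the fourth equation of \eqref{eqeqsystem} is solvable with homogeneous Neumann data.

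For \emph{necessity}, I would first record that any solution in $(H^1(\Omega)\cap L^\infty(\Omega))^4$ satisfies $n,p>0$ and $0<n_{tr}<1$ a.e., either by the elliptic maximum principle applied to $\nabla\cdot J_n+R_n=0$ and $\nabla\cdot J_p+R_p=0$ or by viewing the stationary solution as a time-constant global weak solution and invoking the strict positivity from Proposition~\ref{propglobalsolution}. Then $\ln(n/\mu_n),\ln(p/\mu_p)\in H^1(\Omega)\cap L^\infty(\Omega)$ are admissible test functions; testing the $n$-equation with $\ln(n/\mu_n)+\psi$ and the $p$-equation with $\ln(p/\mu_p)-\psi$, integrating by parts via the no-flux conditions in \eqref{eqboundaryconditions}, adding, and using $R_n=R_p=:R$ together with $\int_\Omega R\,dx=-\int_\Omega\nabla\cdot J_n\,dx=0$, one arrives at
\[
0=-\int_\Omega n\,\bigl|\nabla(\ln\tfrac{n}{\mu_n}+\psi)\bigr|^2\,dx-\int_\Omega p\,\bigl|\nabla(\ln\tfrac{p}{\mu_p}-\psi)\bigr|^2\,dx+\int_\Omega R\,\ln\tfrac{np}{n_0p_0\,\mu_n\mu_p}\,dx.
\]
Splitting $\ln\tfrac{np}{n_0p_0\mu_n\mu_p}$ into the two reaction affinities $\ln\tfrac{n(1-n_{tr})}{n_0\mu_n n_{tr}}$ and $\ln\tfrac{p\,n_{tr}}{p_0\mu_p(1-n_{tr})}$ and using $R=R_n=R_p$, the last integral equals a sum of two contributions each of the form $-\int(a-b)(\ln a-\ln b)\le 0$; hence all three terms vanish. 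This forces $\nabla(\ln(n/\mu_n)+\psi)=0$ and $\nabla(\ln(p/\mu_p)-\psi)=0$, i.e.\ $n=n_\ast e^{-V_n-\psi}$, $p=p_\ast e^{-V_p+\psi}$, and $R_n\equiv R_p\equiv 0$; the latter yields, via \eqref{eqntrrelations}, the two formulas for $n_{tr}$ in \eqref{eqeqstates} and, by their consistency, $n_\ast p_\ast=n_0p_0$.

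Substituting these expressions into Poisson's equation reduces \eqref{eqeqsystem} to the scalar problem
\[
-\lambda\Delta\psi=n_\ast e^{-V_n-\psi}-\tfrac{n_0p_0}{n_\ast}e^{-V_p+\psi}+\varepsilon\tfrac{n_\ast}{n_\ast+n_0e^\psi}-D\ \text{ in }\Omega,\qquad \hat n\cdot\nabla\psi=0\ \text{ on }\partial\Omega,
\]
for $\psi\in H^1(\Omega)$ and $n_\ast>0$. For fixed $n_\ast$ this is the Euler-Lagrange equation of
\[
J_{n_\ast}(\psi)=\tfrac{\lambda}{2}\int_\Omega|\nabla\psi|^2\,dx+n_\ast\int_\Omega e^{-V_n-\psi}\,dx+\tfrac{n_0p_0}{n_\ast}\int_\Omega e^{-V_p+\psi}\,dx-\varepsilon\int_\Omega\Phi_{n_\ast}(\psi)\,dx+\int_\Omega D\psi\,dx,
\]
with $\Phi_{n_\ast}'(s)=n_\ast/(n_\ast+n_0e^s)\in(0,1)$ and $\Phi_{n_\ast}''<0$; thus $J_{n_\ast}$ is strictly convex, and coercive on $H^1(\Omega)$ by splitting $\psi$ into its mean and mean-free part and using Poincar\'e's and Jensen's inequalities together with the two exponential terms. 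Hence $J_{n_\ast}$ has a unique minimiser $\psi(n_\ast)$, the unique weak solution; testing its weak form with the constant $1$ shows the charge-neutrality condition (the second relation in \eqref{eqeqconstants}, consistent with \eqref{eqinitialconservation}) holds automatically. A short calculation gives the gauge identity $\psi(n_\ast e^c)=\psi(n_\ast)+c$ for all $c\in\mathbb R$; since $c\mapsto\ol{\psi(n_\ast e^c)}=\ol{\psi(n_\ast)}+c$ is a bijection of $\mathbb R$, there is exactly one $n_\ast>0$, hence one $p_\ast$ and one $\psi_\infty$, with $\ol{\psi_\infty}=0$. Together with the necessity part this proves existence and uniqueness.

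For the \emph{$\varepsilon$-uniform bounds} the plan is: (i) use a comparison with constant super- and sub-solutions for the reduced equation (whose right-hand side is nonincreasing in $\psi$) to obtain a qualitative $\psi_\infty\in L^\infty(\Omega)$; (ii) test the weak form with $e^{\pm\psi_\infty}-1$, invoke charge neutrality to cancel the mean terms, and Cauchy-Schwarz ($|\Omega|=1$) to deduce $\ol{e^{\psi_\infty}}\le C\,n_\ast$ and $\ol{e^{-\psi_\infty}}\le C/n_\ast$ with $C$ depending only on the data (crucially, $0\le n_{tr,\infty}\le1$ and $\varepsilon\le\varepsilon_0$ ensure the trap term contributes only the harmless bound $\varepsilon_0\,\ol{e^{\psi_\infty}}$); (iii) combine this with Jensen's inequality $\ol{e^{\pm\psi_\infty}}\ge e^{\pm\ol{\psi_\infty}}=1$ to conclude $c_1\le n_\ast\le c_2$ \emph{uniformly in} $\varepsilon\in(0,\varepsilon_0]$; (iv) re-run the comparison with these bounds to get $\|\psi_\infty\|_{L^\infty(\Omega)}\le K_\infty$, whence $\Delta\psi_\infty\in L^\infty(\Omega)$ is bounded uniformly, $\|\psi_\infty\|_{H^2(\Omega)}\le K_\infty$ by elliptic Neumann regularity, and $\psi_\infty\in C(\ol\Omega)$ via $H^2(\Omega)\hookrightarrow C(\ol\Omega)$ in dimension three; (v) insert $\|\psi_\infty\|_{L^\infty}\le K_\infty$ and $c_1\le n_\ast\le c_2$ into \eqref{eqeqstates} to obtain \eqref{eqeqbounds}. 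I expect step (ii)--(iii) --- the $\varepsilon$-uniform two-sided control of the ``quasi-Fermi constant'' $n_\ast$ --- to be the main obstacle: this is where the self-consistency of the potential genuinely enters and where one must exploit that $\varepsilon n_{tr,\infty}$ is a sign-definite perturbation of the doping that is uniformly bounded by $\varepsilon_0$; steps (i), (iv), (v) are then standard elliptic arguments.
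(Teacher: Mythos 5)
Your overall strategy is sound and reaches the same conclusions, but it departs from the paper's proof at both technical cruxes. The paper first substitutes $\wt{\psi_\infty} \coleq \psi_\infty - \ln n_\ast$, which removes the nonlocal constant $n_\ast$ from the reduced Poisson equation altogether, and then invokes the monotone semilinear existence result of \cite{T10}*{Theorem 4.8} after checking that the nonlinearity is strictly increasing in $\psi$ uniformly in $\varepsilon$; this single citation delivers existence, uniqueness, continuity and, crucially, an $\varepsilon$-independent $C(\ol\Omega)$ bound in one stroke, after which $n_\ast = e^{-\ol{\wt{\psi_\infty}}}$ is pinned down and bounded automatically, and the explicit quadratic formula for $n_\ast$ obtained from \eqref{eqeqconstants} yields \eqref{eqeqbounds}. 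Your variational argument (strictly convex, coercive functional for fixed $n_\ast$, followed by the gauge identity $\psi(n_\ast e^c) = \psi(n_\ast) + c$) is a self-contained alternative --- the gauge identity is exactly the paper's substitution in disguise --- and your entropy-production testing makes the necessity step explicit where the paper only asserts it in one line. What your route buys is independence from the cited theorem; what it costs is the bootstrap (i)--(iv) for the uniform bounds, which the paper obtains essentially for free.

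Two points in your plan need shoring up. First, in dimension three $H^1(\Omega)$ does not embed into exponential Orlicz spaces, so $J_{n_\ast}$ is only a proper convex lower semicontinuous functional with values in $(-\infty,+\infty]$; the direct method still produces a unique minimiser, but passing from the minimiser to the Euler--Lagrange equation is not automatic and requires an additional truncation or comparison argument showing the minimiser is bounded. Second, the intermediate bounds you announce in step (ii) do not come out in the stated form: testing with $e^{\pm\psi_\infty}-1$, cancelling the mean via charge neutrality and applying Cauchy--Schwarz with $|\Omega|=1$ gives $\ol{e^{\psi_\infty}} \leq C\, n_\ast(n_\ast+1)$ and $\ol{e^{-\psi_\infty}} \leq C\, n_\ast^{-1}\big(n_0 p_0\, n_\ast^{-1} + 1\big)$ rather than $\ol{e^{\psi_\infty}} \leq C n_\ast$ and $\ol{e^{-\psi_\infty}} \leq C/n_\ast$. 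Fortunately, combined with Jensen's inequality $\ol{e^{\pm\psi_\infty}} \geq e^{\pm\ol{\psi_\infty}} = 1$, these corrected bounds still close the argument and give $c_1 \leq n_\ast \leq c_2$ uniformly in $\varepsilon \in (0,\varepsilon_0]$, so steps (iii)--(v) go through as you describe; but the inequalities should be stated in their achievable form.
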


We introduce the \emph{entropy functional} $E(n, p, n_{tr}, \psi)$ for non-negative functions $n, p, n_{tr} \in L^2(\Omega)$ satisfying $n_{tr} \leq 1$ and $\ol n - \ol p + \varepsilon \ol{n_{tr}} = \ol D$ where $\psi \in H^1(\Omega)$ is the unique solution of \eqref{eqpsi} with right hand side $f = n - p + \varepsilon n_{tr} - D$ and normalisation $\ol \psi = 0$:
\begin{multline}
E(n, p, n_{tr}, \psi) \coleq \int_{\Omega} \bigg( n \ln \frac{n}{n_0 \mu_n} - (n-n_0\mu_n) \\ + p \ln \frac{p}{p_0 \mu_p} - (p-p_0\mu_p) + \frac{\lambda}{2} \big| \nabla \psi \big|^2 + \varepsilon \int_{1/2}^{n_{tr}} \ln \left( \frac{s}{1-s} \right) ds \bigg) dx. \label{eqentropy}
\end{multline}
The densities $n$ and $p$ enter via Boltzmann entropy contributions $a \ln a - (a - 1) \geq 0$, whereas $n_{tr}$ appears within the entropy functional via an integral term. We first mention that the integral $\int_{1/2}^{n_{tr}} \ln \big( \frac{s}{1-s} \big) ds$ is non-negative and finite for all $n_{tr}(x) \in [0,1]$. In more detail, we may write 
\begin{multline*}
\int_{1/2}^{n_{tr}} \ln \left( \frac{s}{1-s} \right) ds = \big[ n_{tr} \ln n_{tr} - (n_{tr} - 1) \big] \\
+ \big[ (1 - n_{tr}) \ln (1 - n_{tr}) - ((1 - n_{tr}) - 1) \big] + \ln 2 - 1.
\end{multline*}
Consequently, both the occupied and unoccupied trapped states ($n_{tr}$ and $1 - n_{tr}$) are described via Boltzmann statistics within the entropy functional, and the integral $\int_{1/2}^{n_{tr}} \ln \big( \frac{s}{1-s} \big) ds$ allows to combine the contributions of $n_{tr}$ and $1 - n_{tr}$ in a compact fashion.

One can further verify that the entropy functional \eqref{eqentropy} is indeed a Lyapunov functional: By defining the \emph{entropy production functional}
\begin{equation}
\label{eeplaw}
P(n, p, n_{tr}, \psi) := -\frac{d}{dt} E(n, p, n_{tr}, \psi),
\end{equation}
we first calculate along solutions of \eqref{eqsystem} that formally
\begin{multline}
P(n, p, n_{tr}, \psi) = 
\int_{\Omega} \bigg( \frac{|J_n|^2}{n} + \frac{|J_p|^2}{p} \\ - R_n \ln \left( \frac{n(1-n_{tr})}{n_0 \mu_n n_{tr}} \right) - R_p \ln \left( \frac{p n_{tr}}{p_0 \mu_p (1-n_{tr})} \right) \bigg) dx. \label{eqproduction}
\end{multline}
The entropy production functional involves non-negative flux terms as well as recombination terms of the form $(a-1) \ln a \geq 0$. The entropy production $P$ is, therefore, a non-negative functional, which ensures the monotone decrease in time of the entropy $E$ along trajectories of \eqref{eqsystem}. More precisely, it will be shown in Theorem \ref{theoremexpconvergence} that the global weak solutions to \eqref{eqsystem} obtained in Proposition \ref{propglobalsolution} satisfy a suitable weak version of \eqref{eeplaw}, see \eqref{eqweakeplaw} below.

The following theorem constitutes a so-called entropy--entropy production (EEP) estimate. This is a functional inequality between entropy and entropy production for arbitrary, yet admissible non-negative functions $n, p, n_{tr} \in L^\infty(\Omega)$, $n_{tr} \leq 1$; in particular, the electrostatic potential $\psi \in H^1(\Omega)$ in the following theorem must be the unique solution of \eqref{eqpsi} subject to $f = n - p + \varepsilon n_{tr} - D$ and the normalisation $\ol \psi = 0$.

\begin{theorem}[Entropy--Entropy Production Estimate]
\label{theoremeepinequality}
Consider all non-negative functions $n, p, n_{tr} \in L^\infty(\Omega)$ subject to $n, p \leq \mathcal{M}$, $n_{tr} \leq 1$, and $\ol n - \ol p + \varepsilon \ol{n_{tr}} = \ol D$ and accordingly determine $\psi \in H^1(\Omega)$ as the unique solution to \eqref{eqpsi} with $f = n - p + \varepsilon n_{tr} - D$ and $\ol \psi = 0$. Then, there exist explicit constants $\varepsilon_0 > 0$ and $C_{EEP} > 0$ depending on $\mathcal{M}$ and on $K_\infty$ (as given in Proposition \ref{propequilibrium})  
such that 
\[
E(n, p, n_{tr}, \psi) - E(n_\infty, p_\infty, n_{tr,\infty}, \psi_\infty) \leq C_{EEP} P(n, p, n_{tr}, \psi)
\]
holds true for all $\varepsilon \in (0, \varepsilon_0]$. 
\end{theorem}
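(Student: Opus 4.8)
The plan is to run the entropy method: rewrite the relative entropy $E-E_\infty$ as a sum of relative Boltzmann entropies plus an electrostatic term, bound $E-E_\infty$ from above and $P$ from below by the same family of squared $L^2$-deviations, and then close the estimate by a functional inequality exploiting the charge-neutrality hypothesis $\ol n-\ol p+\varepsilon\ol{n_{tr}}=\ol D$, its equilibrium counterpart in \eqref{eqeqconstants}, and Poisson's equation. For the first step I would use the explicit equilibrium formulas \eqref{eqeqstates}--\eqref{eqntrrelations} to split $n\ln\tfrac{n}{n_0\mu_n}$, $p\ln\tfrac{p}{p_0\mu_p}$ and $\varepsilon\int_{1/2}^{n_{tr}}\ln\tfrac{s}{1-s}\,ds$ into the Bregman (relative-entropy) distances of $n$ to $n_\infty$, of $p$ to $p_\infty$ and of $n_{tr}$ to $n_{tr,\infty}$, plus a remainder affine in $n-n_\infty$, $p-p_\infty$ and $\varepsilon(n_{tr}-n_{tr,\infty})$ with common coefficient $\ln\tfrac{n_\ast}{n_0}-\psi_\infty$. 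Integrating over $\Omega$, the $\ln\tfrac{n_\ast}{n_0}$-part of this remainder vanishes by the two charge-neutrality identities, while the $\psi_\infty$-part equals $-\lambda\int_\Omega\nabla\psi_\infty\cdot\nabla(\psi-\psi_\infty)\,dx$ after inserting $-\lambda\Delta(\psi-\psi_\infty)=(n-n_\infty)-(p-p_\infty)+\varepsilon(n_{tr}-n_{tr,\infty})$ and integrating by parts; this completes the square with $\tfrac\lambda2(|\nabla\psi|^2-|\nabla\psi_\infty|^2)$, so that
\[
E-E_\infty=\int_\Omega\Big(n\ln\tfrac{n}{n_\infty}-(n-n_\infty)+p\ln\tfrac{p}{p_\infty}-(p-p_\infty)+\varepsilon\,r(n_{tr})+\tfrac\lambda2|\nabla(\psi-\psi_\infty)|^2\Big)dx\ \ge\ 0,
\]
where $r$ is the second-order Taylor remainder of $s\mapsto\int_{1/2}^s\ln\tfrac{\sigma}{1-\sigma}\,d\sigma$ at $n_{tr,\infty}$.

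Second, using $0\le n,p\le\mathcal M$, $\mu_\infty\le n_\infty,p_\infty\le M_\infty$ and $\mu_\infty<n_{tr,\infty}<1-\mu_\infty$ (with $\mu_\infty,M_\infty$ controlled by $K_\infty$), elementary convexity estimates bound each Bregman term by a constant times $(\sqrt n-\sqrt{n_\infty})^2$, $(\sqrt p-\sqrt{p_\infty})^2$, resp.\ $(n_{tr}-n_{tr,\infty})^2$; and since $\|\psi\|_{C(\ol\Omega)}\le C(\mathcal M)$ by $W^{2,q}$-regularity for the right-hand side $n-p+\varepsilon n_{tr}-D$, the elliptic estimate $\|\nabla(\psi-\psi_\infty)\|_{L^2}^2\le C(\|n-n_\infty\|_{L^2}^2+\|p-p_\infty\|_{L^2}^2+\varepsilon^2\|n_{tr}-n_{tr,\infty}\|_{L^2}^2)$ allows us to absorb the electrostatic term, so that $E-E_\infty\le C_1(\|\sqrt n-\sqrt{n_\infty}\|_{L^2}^2+\|\sqrt p-\sqrt{p_\infty}\|_{L^2}^2+\varepsilon\|n_{tr}-n_{tr,\infty}\|_{L^2}^2)$. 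For the lower bound on $P$, write $u_n\coleq n\,e^{V_n+\psi}$, $u_p\coleq p\,e^{V_p-\psi}$, $b_n\coleq\tfrac{n(1-n_{tr})}{n_0\mu_n}$ and $b_p\coleq\tfrac{p\,n_{tr}}{p_0\mu_p}$; then $\tfrac{|J_n|^2}{n}=4e^{-V_n-\psi}|\nabla\sqrt{u_n}|^2$, $\tfrac{|J_p|^2}{p}=4e^{-V_p+\psi}|\nabla\sqrt{u_p}|^2$, and $-R_n\ln\tfrac{n(1-n_{tr})}{n_0\mu_n n_{tr}}=\tfrac1{\tau_n}(n_{tr}-b_n)(\ln n_{tr}-\ln b_n)\ge\tfrac4{\tau_n}(\sqrt{n_{tr}}-\sqrt{b_n})^2$ by the elementary inequality $(x-y)(\ln x-\ln y)\ge4(\sqrt x-\sqrt y)^2$, and analogously for $R_p$; using that $\|\psi\|_{C(\ol\Omega)},\|V_n\|_\infty,\|V_p\|_\infty$ are bounded, this gives
\[
P\ge c_2\big(\|\nabla\sqrt{u_n}\|_{L^2}^2+\|\nabla\sqrt{u_p}\|_{L^2}^2+\|\sqrt{n_{tr}}-\sqrt{b_n}\|_{L^2}^2+\|\sqrt{1-n_{tr}}-\sqrt{b_p}\|_{L^2}^2\big),
\]
with \emph{no} factor $\varepsilon$ on the reaction defects.

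Third, the claim reduces to the functional inequality $\|\sqrt n-\sqrt{n_\infty}\|_{L^2}^2+\|\sqrt p-\sqrt{p_\infty}\|_{L^2}^2+\varepsilon\|n_{tr}-n_{tr,\infty}\|_{L^2}^2\le C_3\,P$. Poincaré's inequality applied to the flux terms gives $\sqrt{u_n}\approx\ol{\sqrt{u_n}}\eqcol U_n$, $\sqrt{u_p}\approx U_p$ in $L^2$ (errors $\le C\sqrt P$), hence $n\approx U_n^2e^{-V_n-\psi}$, $p\approx U_p^2e^{-V_p+\psi}$; the $R_n$-defect forces, by globally Lipschitz manipulations, $n_{tr}\approx\tfrac{n}{n_0\mu_n+n}\approx\tfrac{U_n^2}{n_0e^{\psi}+U_n^2}$, and the $R_p$-defect forces $n_{tr}\approx\tfrac{p_0}{p_0+U_p^2e^{\psi}}$, so equating the two profiles yields $U_n^2U_p^2\approx n_0p_0$. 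Substituting the profiles for $n,p,n_{tr}$ into $-\lambda\Delta\psi=n-p+\varepsilon n_{tr}-D$ and its spatial average shows that $(\psi,U_n^2,U_p^2)$ satisfies, up to errors $\le C\sqrt P$, the same self-consistent system (Poisson's equation with the equilibrium profiles and the constraints \eqref{eqeqconstants}) which, by Proposition \ref{propequilibrium}, has the unique solution $(\psi_\infty,n_\ast,p_\ast)$. A quantitative stability estimate for this system then gives $\|\psi-\psi_\infty\|_{H^1}+|U_n^2-n_\ast|+|U_p^2-p_\ast|\le C\sqrt P$, from which $\sqrt n\approx U_ne^{-(V_n+\psi)/2}\approx\sqrt{n_\ast}\,e^{-(V_n+\psi_\infty)/2}=\sqrt{n_\infty}$ in $L^2$ (and likewise for $p$); finally $\|n_{tr}-n_{tr,\infty}\|_{L^2}^2\le C\big(\|\sqrt{n_{tr}}-\sqrt{b_n}\|_{L^2}^2+\|n-n_\infty\|_{L^2}^2\big)\le C\,P$ via \eqref{eqntrrelations}. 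Combined with the two bounds from the previous paragraph, this proves the theorem with $C_{EEP}=C_1C_3$.

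The step I expect to be the main obstacle is the quantitative stability of the self-consistent equilibrium system invoked in the third paragraph, \emph{uniformly} in $\varepsilon\in(0,\varepsilon_0]$. Near equilibrium it should follow from an implicit-function-theorem argument, once one checks that the linearisation — essentially $-\lambda\Delta$ plus a positive bounded potential, coupled to the scalar constraints $U_n^2U_p^2=n_0p_0$ and the averaged Poisson equation — is boundedly invertible; for $\varepsilon_0$ small this is a perturbation of the purely Shockley--Read--Hall situation treated in \cite{FK18}, which is also where the restriction to small $\varepsilon_0$ enters. Away from equilibrium, i.e.\ when $P$ exceeds a fixed threshold, one argues instead from the fact that $E-E_\infty$ is bounded above by a constant $C_1(\mathcal M,K_\infty)$, so that a crude positive lower bound on $P$ is enough. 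Keeping track of all constants through these steps is what makes $\varepsilon_0$ and $C_{EEP}$ explicit.
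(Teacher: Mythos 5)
Your Steps 1 and 2 are sound and agree with the paper's Lemma \ref{lemmarelativeentropy} and Proposition \ref{propeg}: the rewriting of $E-E_\infty$ via the equilibrium identities, the conservation law and an integration by parts against Poisson's equation, the quadratic upper bound on the relative entropy, and the lower bound on $P$ in terms of $|\nabla\sqrt{u_n}|^2$, $|\nabla\sqrt{u_p}|^2$ and the reaction defects are all correct. The problem is Step 3, which is where the theorem actually lives. You reduce everything to a ``quantitative stability estimate for the self-consistent system'' giving $\|\psi-\psi_\infty\|_{H^1}+|U_n^2-n_\ast|+|U_p^2-p_\ast|\le C\sqrt P$, uniformly in $\varepsilon$ and with explicit constants, and you do not prove it; you yourself flag it as the main obstacle. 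Your two suggested routes do not close it: the implicit-function-theorem argument only works near equilibrium, and your fallback for the far-from-equilibrium regime --- ``a crude positive lower bound on $P$'' when $E-E_\infty$ exceeds a threshold --- is itself a nontrivial claim of exactly the same type as the theorem (one must exclude sequences in $\mathcal N$ with $P\to 0$ but $E-E_\infty$ bounded below), and without a compactness argument it is not available; with one, the constants cease to be explicit, which is precisely the non-constructive approach of \cite{GGH96,GH97} that this paper is written to avoid. A global monotone-operator stability estimate for the semilinear equation \eqref{eqeqpotentialabstract} (using the uniform positivity of $\partial_y d$ established in the proof of Proposition \ref{propequilibrium}) could conceivably repair this, but that argument is not in your proposal, and tracking the errors from the approximate constancy of $\sqrt{u_n},\sqrt{u_p}$ and from the reaction defects through it is nontrivial.

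For contrast, the paper avoids the stability analysis of the self-consistent system entirely. The key device (Proposition \ref{propgd}) is the pointwise inequality
\[
\frac{|J_n|^2}{n_\infty n}\;\ge\;2\,\nabla(\psi-\psi_\infty)\cdot\nabla\Bigl(\frac{n-n_\infty}{n_\infty}\Bigr),
\]
obtained by writing $J_n=N\nabla(n/N)$ with the intermediate equilibrium $N=n_\ast e^{-\psi-V_n}$ and using $|a+b|^2\ge 2\,a\cdot b$; integrating by parts and inserting $-\lambda\Delta(\psi-\psi_\infty)=(n-n_\infty)-(p-p_\infty)+\varepsilon(n_{tr}-n_{tr,\infty})$ produces the diagonal terms $\int\bigl(\tfrac{(n-n_\infty)^2}{n_\infty}+\tfrac{(p-p_\infty)^2}{p_\infty}\bigr)dx$ directly, and the off-diagonal bilinear terms are absorbed into the reaction part of $P$ by the elementary inequalities of Lemma \ref{lemmareactionterms}. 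The smallness of $\varepsilon_0$ then enters only in the final absorption of the $\varepsilon$-weighted $n_{tr}$-terms (Step 3 of the paper's proof), not through any perturbation off the Shockley--Read--Hall case as you suggest. If you want to salvage your route, the piece you must supply is a fully quantitative, $\varepsilon$-uniform stability estimate for the coupled system $(\psi,U_n^2,U_p^2)$; as written, the proposal asserts the conclusion at its critical juncture.
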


%

Note that $C_{EEP}$ is independent of $\varepsilon \in (0, \varepsilon_0]$ and that this abstract EEP inequality can be applied to the global solution to \eqref{eqsystem} by using $\mathcal{M}=M$ from Proposition \ref{propglobalsolution}.
We are then able to prove the exponential decay of the entropy relative to the equilibrium by using a Gronwall argument.

\begin{theorem}[Exponential Decay of the Relative Entropy]
\label{theoremexpconvergence}
Let $\varepsilon \in (0, \varepsilon_0]$ with $\varepsilon_0 > 0$ from Theorem \ref{theoremeepinequality}, and let $(n, p, n_{tr}, \psi)$ be the unique global weak solution to \eqref{eqsystem} with non-negative initial datum $(n_I, p_I, n_{tr,I}) \in (H^1(\Omega) \cap L^\infty(\Omega))^2 \times L^\infty(\Omega)$, $n_{tr,I} \leq 1$, satisfying $\ol{n_I} - \ol{p_I} + \varepsilon \ol{n_{tr,I}} = \ol D$ according to Proposition \ref{propglobalsolution}. In addition, let $(n_\infty, p_\infty, n_{tr,\infty}, \psi_\infty)$ be the unique equilibrium state characterised in Proposition \ref{propequilibrium} as a solution to \eqref{eqeqsystem}. Then, $(n, p, n_{tr}, \psi)$ fulfils the weak entropy production law 
\begin{equation}
\label{eqweakeplaw}
E(n,p,n_{tr},\psi)(t_1) + \int_{t_0}^{t_1} P(n, p, n_{tr}, \psi)(s) \, ds = E(n,p,n_{tr},\psi)(t_0)
\end{equation}
for all $0 < t_0 \leq t_1 < \infty$. As a consequence, $E(n, p, n_{tr}, \psi)$ converges exponentially to $E(n_\infty, p_\infty, n_{tr,\infty}, \psi_\infty)$ with explicit rate and constant as a function of time $t \geq 0$. More precisely, 
\begin{multline}
\label{eqexpconventropy}
E(n, p, n_{tr}, \psi) - E(n_\infty, p_\infty, n_{tr,\infty}, \psi_\infty) \\
\leq \Big( E(n_I, p_I, n_{tr,I}, \psi_I) - E(n_\infty, p_\infty, n_{tr,\infty}, \psi_\infty) \Big) e^{-C_{EEP}^{-1} t}
\end{multline}
where $\psi_I \in H^1(\Omega)$ is the unique weak solution to \eqref{eqpsi} with $f = n_I - p_I + \varepsilon n_{tr,I} - D$ and $\ol{\psi_I} = 0$.
\end{theorem}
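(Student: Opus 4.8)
The plan is to establish the two assertions in turn: first the weak entropy production law \eqref{eqweakeplaw}, which is a rigorous version of the formal identity $P=-\tfrac{d}{dt}E$ underlying \eqref{eqproduction}, and then to deduce the exponential decay \eqref{eqexpconventropy} by combining \eqref{eqweakeplaw} with the entropy--entropy production estimate of Theorem \ref{theoremeepinequality} via a Gronwall argument followed by the limit $t_0\to0^+$.

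For the first step I would fix $0<t_0\le t_1<\infty$ and use \eqref{equpperboundnp}, \eqref{eqlowerboundnp} and \eqref{eqlowerupperboundntr} to obtain constants $c_0,c_1\in(0,\tfrac12)$ with $c_0\le n,p\le M$ and $c_1\le n_{tr}\le 1-c_1$ on $[t_0,t_1]\times\Omega$. Then the entropy variables $\ln\tfrac{n}{n_0\mu_n}$, $\ln\tfrac{p}{p_0\mu_p}$, $\ln\tfrac{n_{tr}}{1-n_{tr}}$ and $\psi$ are bounded there, and since $n,p\in L^2(t_0,t_1;H^1(\Omega))$ with $\tfrac1n,\tfrac1p$ bounded, one also has $\ln\tfrac{n}{n_0\mu_n},\ln\tfrac{p}{p_0\mu_p}\in L^2(t_0,t_1;H^1(\Omega))$. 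I would differentiate $E$ in time term by term: by a chain-rule lemma for elements of $W_2(0,T)$ (applied after modifying the integrands outside $[c_0,M]$ to have bounded derivatives, which does not change the attained values), the maps $t\mapsto\int_\Omega(n\ln\tfrac{n}{n_0\mu_n}-(n-n_0\mu_n))\,dx$ and its $p$-analogue are absolutely continuous on $[t_0,t_1]$ with a.e.\ derivatives $\langle\partial_t n,\ln\tfrac{n}{n_0\mu_n}\rangle_{H^1(\Omega)^\ast\times H^1(\Omega)}$ and $\langle\partial_t p,\ln\tfrac{p}{p_0\mu_p}\rangle_{H^1(\Omega)^\ast\times H^1(\Omega)}$; since $\varepsilon\partial_t n_{tr}=R_p-R_n\in C([0,T],L^\infty(\Omega))$, the map $t\mapsto\varepsilon\int_\Omega\int_{1/2}^{n_{tr}}\ln\tfrac{s}{1-s}\,ds\,dx$ is $C^1$ on $[t_0,t_1]$ with derivative $\int_\Omega(R_p-R_n)\ln\tfrac{n_{tr}}{1-n_{tr}}\,dx$; and differentiating Poisson's equation in time, where the charge conservation \eqref{eqconservationlaw} forces the right-hand side $\partial_t n-\partial_t p+\varepsilon\partial_t n_{tr}\in L^2(t_0,t_1;H^1(\Omega)^\ast)$ to have zero spatial mean, yields $\partial_t\psi\in L^2(t_0,t_1;H^1(\Omega))$ with $\ol{\partial_t\psi}=0$, so that $t\mapsto\tfrac\lambda2\int_\Omega|\nabla\psi|^2\,dx$ is absolutely continuous with derivative $\lambda\int_\Omega\nabla\psi\cdot\nabla\partial_t\psi\,dx=\langle\partial_t n-\partial_t p+\varepsilon\partial_t n_{tr},\psi\rangle$. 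Adding the four contributions, inserting the pointwise-in-time weak formulation $\langle\partial_t n,v\rangle=-\int_\Omega J_n\cdot\nabla v\,dx+\int_\Omega R_n v\,dx$ (a standard consequence of the integrated weak formulation) with $v=\ln\tfrac{n}{n_0\mu_n}+\psi\in H^1(\Omega)$ and its $p$-counterpart, and using that the $\psi$-terms cancel while $\nabla(\ln\tfrac{n}{n_0\mu_n}+\psi)=\tfrac{J_n}{n}$ and $\nabla(\ln\tfrac{p}{p_0\mu_p}-\psi)=\tfrac{J_p}{p}$, I expect $\tfrac{d}{dt}E(n,p,n_{tr},\psi)=-P(n,p,n_{tr},\psi)$ for a.e.\ $t\in(t_0,t_1)$; since $E(\cdot)$ is then absolutely continuous on $[t_0,t_1]$, integration gives \eqref{eqweakeplaw}.

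For the second step I would set $E_\infty\coleq E(n_\infty,p_\infty,n_{tr,\infty},\psi_\infty)$ and $y(t)\coleq E(n,p,n_{tr},\psi)(t)-E_\infty$. By \eqref{eqweakeplaw}, $P(n,p,n_{tr},\psi)\in L^1(t_0,t_1)$ and $y$ is absolutely continuous on $[t_0,\infty)$ with $y'(t)=-P(n,p,n_{tr},\psi)(t)$ a.e. For every $t>0$ the state $(n(t),p(t),n_{tr}(t),\psi(t))$ satisfies the hypotheses of Theorem \ref{theoremeepinequality} with $\mathcal M=M$ (using $0\le n,p\le M$, $0\le n_{tr}\le1$, the conservation law \eqref{eqconservationlaw} and $\ol{\psi(t)}=0$), so $y(t)\le C_{EEP}\,P(n,p,n_{tr},\psi)(t)=-C_{EEP}\,y'(t)$ a.e., and since $C_{EEP}$ is independent of $\varepsilon$, Gronwall's lemma gives $y(t)\le y(t_0)\,e^{-C_{EEP}^{-1}(t-t_0)}$ for all $t\ge t_0>0$. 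Finally I would pass to the limit $t_0\to0^+$: by $n,p\in C([0,T],L^2(\Omega))$ with uniform bound $\|n\|_{L^\infty}+\|p\|_{L^\infty}\le M$, by $n_{tr}\in C([0,T],L^\infty(\Omega))$, by $\psi\in C([0,T],H^2(\Omega))$, and by dominated convergence applied to the uniformly bounded Boltzmann integrands and to $\tfrac\lambda2|\nabla\psi|^2$, one gets $y(t_0)\to E(n_I,p_I,n_{tr,I},\psi_I)-E_\infty$, which yields \eqref{eqexpconventropy} for all $t\ge0$.

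The step I expect to be the main obstacle is the rigorous proof of the weak entropy production law \eqref{eqweakeplaw}: it requires justifying that the unbounded entropy variables $\ln\tfrac{n}{n_0\mu_n}+\psi$ and $\ln\tfrac{p}{p_0\mu_p}-\psi$ are admissible test functions together with the attendant chain rule --- which rests on the interior-in-time positivity bounds \eqref{eqlowerboundnp}--\eqref{eqlowerupperboundntr} that degenerate as $t\to0$ (forcing $t_0>0$) --- and careful handling of the self-consistent potential, whose time derivative couples back to $\partial_t n$, $\partial_t p$ and $\partial_t n_{tr}$ through the time-differentiated Poisson equation. Once \eqref{eqweakeplaw} is in hand, the Gronwall argument and the limit $t_0\to0^+$ are routine.
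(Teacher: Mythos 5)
Your proposal is correct and follows essentially the same route as the paper: establish the weak entropy production law on $[t_0,t_1]$ with $t_0>0$ using the regularity and the interior-in-time positivity bounds from Proposition \ref{propglobalsolution}, apply Theorem \ref{theoremeepinequality} to the solution together with a Gronwall argument, and finally let $t_0\to 0^+$ via the continuity of $n,p$ in $L^2(\Omega)$, $n_{tr}$ in $L^\infty(\Omega)$ and $\psi$ in $H^2(\Omega)$. In fact your treatment of the first step (the chain rule in $W_2(0,T)$, the admissibility of the entropy variables as test functions, and the cancellation of the $\psi$-terms) is considerably more detailed than the paper's, which merely asserts that \eqref{eqweakeplaw} ``readily follows'' from \eqref{eqentropy} and \eqref{eqproduction}.
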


\begin{corollary}[Exponential Convergence to the Equilibrium]
\label{corconvlinfty}
Under the hypotheses of Theorem \ref{theoremexpconvergence}, the following improved convergence properties with constants $0 < c, C < \infty$ both depending on $M$ and $K_\infty$ but not on $\varepsilon \in (0, \varepsilon_0]$ hold true for all $t \geq 0$:
\begin{multline}
\label{eqexpconvlinfty}
\| n - n_\infty \|_{L^\infty(\Omega)} + \| p - p_\infty \|_{L^\infty(\Omega)} \\ + \| n_{tr} - n_{tr,\infty} \|_{L^\infty(\Omega)} + \| \psi - \psi_\infty \|_{H^2(\Omega)} \leq C e^{-c\, t}.
\end{multline}
In particular, $\psi \rightarrow \psi_\infty$ in $L^\infty(\Omega)$ at an exponential rate.
\end{corollary}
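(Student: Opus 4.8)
The plan is to propagate the exponential decay of the relative entropy from Theorem~\ref{theoremexpconvergence} into the stronger norms by a bootstrap in several steps. \textbf{Step 1 (lower-order decay).} Expanding $E-E_\infty$ around the equilibrium states~\eqref{eqeqstates} and using charge neutrality~\eqref{eqconservationlaw} to cancel the non-entropic contributions — the computation that also underlies the EEP estimate of Theorem~\ref{theoremeepinequality} — one obtains the decomposition
\[
E(n,p,n_{tr},\psi) - E(n_\infty,p_\infty,n_{tr,\infty},\psi_\infty) = H(n\,|\,n_\infty) + H(p\,|\,p_\infty) + \tfrac{\lambda}{2}\,\|\nabla(\psi-\psi_\infty)\|_{L^2(\Omega)}^2 + \varepsilon\, H_{tr},
\]
with $H(f\,|\,g) \coleq \int_\Omega (f\ln(f/g) - f + g)\,dx \ge 0$ and $H_{tr}\ge 0$ the corresponding relative entropy built from $n_{tr}$ and $1-n_{tr}$. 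Since $n,p,n_\infty,p_\infty \le M$ on the normalised domain, the Csisz\'ar--Kullback--Pinsker inequality yields $H(f\,|\,g) \ge c_M\,\|f-g\|_{L^1(\Omega)}^2$, and combining with~\eqref{eqexpconventropy} gives $\|n-n_\infty\|_{L^1(\Omega)} + \|p-p_\infty\|_{L^1(\Omega)} + \|\nabla(\psi-\psi_\infty)\|_{L^2(\Omega)} \le C e^{-\frac12 C_{EEP}^{-1} t}$ with $\varepsilon$-uniform constants. The trap term only produces $\varepsilon^{1/2}\|n_{tr}-n_{tr,\infty}\|_{L^1(\Omega)}$, which degenerates as $\varepsilon\to 0$; this is why $n_{tr}$ will be treated separately below.

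\textbf{Step 2 ($L^\infty$-decay of $n$ and $p$).} By Proposition~\ref{propglobalsolution} the densities $n,p$ are bounded by $M$, the drifts $\nabla(\psi+V_n)$ and $\nabla(-\psi+V_p)$ are bounded in $L^q(\Omega)$ for some $q>3$ (from $\|\psi(t)\|_{H^2}\le K$ and the embedding $H^2(\Omega)\hookrightarrow W^{1,q}(\Omega)$ in three dimensions), and $R_n,R_p$ are bounded, all uniformly in $t$ and $\varepsilon$. Parabolic regularity (De Giorgi--Nash--Moser) for the equations satisfied by $n$ and $p$ with no-flux data then yields, for $t\ge 1$, a uniform H\"older bound $\|n(t)\|_{C^\alpha(\ol\Omega)} + \|p(t)\|_{C^\alpha(\ol\Omega)}\le C$ for some $\alpha\in(0,1)$. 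Since $n_\infty = n_\ast e^{-\psi_\infty-V_n}$ and $p_\infty = p_\ast e^{\psi_\infty-V_p}$ lie in $C^{\alpha'}(\ol\Omega)$ with norms controlled by $K_\infty$ (with $\alpha' = \min\{\alpha,1/2\}$), the differences $n-n_\infty$, $p-p_\infty$ are uniformly bounded in $C^{\alpha'}(\ol\Omega)$. The interpolation inequality $\|f\|_{L^\infty(\Omega)}\le C\,\|f\|_{C^{\alpha'}(\ol\Omega)}^{\theta}\,\|f\|_{L^1(\Omega)}^{1-\theta}$ with $\theta = 3/(3+\alpha')$ (valid because $C^2$-domains satisfy a measure-density condition) then upgrades Step~1 to $\|n-n_\infty\|_{L^\infty(\Omega)} + \|p-p_\infty\|_{L^\infty(\Omega)}\le C e^{-c t}$ with $c = \tfrac12(1-\theta) C_{EEP}^{-1}>0$ for $t\ge 1$, hence (after enlarging $C$) for all $t\ge 0$.

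\textbf{Step 3 ($\psi$ and $n_{tr}$).} Subtracting the stationary Poisson equation of Proposition~\ref{propequilibrium} from~\eqref{eqpsi} gives $-\lambda\,\Delta(\psi-\psi_\infty) = (n-n_\infty) - (p-p_\infty) + \varepsilon(n_{tr}-n_{tr,\infty})$ with homogeneous Neumann data and $\ol{\psi-\psi_\infty} = 0$, so elliptic $H^2$-regularity yields $\|\psi-\psi_\infty\|_{H^2(\Omega)}\le C\bigl(\|n-n_\infty\|_{L^2} + \|p-p_\infty\|_{L^2} + \varepsilon\|n_{tr}-n_{tr,\infty}\|_{L^2}\bigr)$, and then $\psi\to\psi_\infty$ exponentially in $H^2(\Omega)\hookrightarrow C(\ol\Omega)$ once the right-hand side is controlled. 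For $n_{tr}$ one observes that the relations~\eqref{eqntrrelations} force $R_n(n_\infty,n_{tr,\infty}) = R_p(p_\infty,n_{tr,\infty}) = 0$ pointwise; subtracting this from the trap equation turns the pointwise identity into $\varepsilon\,\partial_t(n_{tr}-n_{tr,\infty}) = -a(t,x)\,(n_{tr}-n_{tr,\infty}) + g(t,x)$, where $a(t,x)\ge a_0 \coleq \tau_n^{-1}+\tau_p^{-1}>0$ (because $s\mapsto R_p(\cdot,s)-R_n(\cdot,s)$ is strictly decreasing with slope bounded below in modulus, uniformly in $\varepsilon$) and $|g(t,x)|\le C\bigl(|n(t,x)-n_\infty(x)| + |p(t,x)-p_\infty(x)|\bigr)$ by the Lipschitz dependence of $R_n,R_p$ on $n,p$. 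Duhamel's formula together with the $L^\infty$-decay from Step~2 then yields $\|n_{tr}(t)-n_{tr,\infty}\|_{L^\infty(\Omega)}\le C e^{-c t}$ with an $\varepsilon$-uniform rate: the homogeneous rate $a_0/\varepsilon$ only accelerates decay, and the convolution constant $\frac1\varepsilon\int_0^t e^{-a_0(t-s)/\varepsilon} e^{-c s}\,ds\le (a_0-c\varepsilon_0)^{-1} e^{-c t}$ stays bounded once $c$ is chosen with $c\varepsilon_0<a_0$. Inserting this into the elliptic estimate closes the proof of~\eqref{eqexpconvlinfty}.

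\textbf{Main obstacle.} The crux is Step~2: one must run the parabolic regularity machinery for the drift--diffusion operators with only $L^q$-integrable ($q>3$) drift and a bounded source, and verify that the resulting H\"older seminorm is controlled uniformly both in time (not merely on bounded intervals) and in $\varepsilon\in(0,\varepsilon_0]$ — i.e.\ it depends only on $M$, $K$, the fixed coefficients and $\Omega$. Everything else is either classical (Csisz\'ar--Kullback--Pinsker, elliptic $H^2$-regularity, Gronwall/Duhamel) or already provided by Propositions~\ref{propglobalsolution} and~\ref{propequilibrium}. A secondary subtlety, already flagged above, is that Csisz\'ar--Kullback--Pinsker alone cannot deliver an $\varepsilon$-uniform bound on $n_{tr}-n_{tr,\infty}$ because of the $\varepsilon$-weight on $H_{tr}$ in the entropy, so the dissipative ODE argument in Step~3 is essential for uniformity.
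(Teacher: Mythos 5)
Your overall architecture matches the paper's: Csisz\'ar--Kullback--Pinsker applied to the relative-entropy decomposition of Lemma \ref{lemmarelativeentropy} for the $L^1$ decay of $n,p$, the elliptic estimate $\|\psi-\psi_\infty\|_{H^2}\le C(\|n-n_\infty\|_{L^2}+\|p-p_\infty\|_{L^2}+\varepsilon\|n_{tr}-n_{tr,\infty}\|_{L^2})$ for the potential, and the Duhamel argument $\varepsilon\,\partial_t u=-a\,u+g$ with $a\ge 2$ (after setting $\tau_n=\tau_p=1$) for the $\varepsilon$-uniform $L^\infty$ decay of $n_{tr}$ --- your observation that the $\varepsilon$-weight on the trap entropy forces this separate treatment is exactly the paper's reasoning. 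Where you genuinely diverge is the $L^1\to L^\infty$ upgrade for $n$ and $p$. You invoke De Giorgi--Nash--Moser to get a \emph{uniform-in-time} H\"older bound $\|n(t)\|_{C^\alpha(\ol\Omega)}\le C$ and then interpolate $L^\infty$ between $C^{\alpha'}$ and $L^1$; the paper instead derives, by testing with $-|\nabla n|^{q-2}\Delta n$, a merely \emph{polynomially growing} bound $\|\nabla n(t)\|_{L^6(\Omega)}\le(\|\nabla n(t_0)\|_{L^6}^2+C_3(t-t_0))^{1/2}$, first upgrades the $L^1$ decay to $L^6$ decay by interpolating against the uniform $L^\infty$ bound from \eqref{equpperboundnp}, and then uses Gagliardo--Nirenberg--Moser, $\|n-n_\infty\|_{L^\infty}\le C\|n-n_\infty\|_{W^{1,6}}^{1/2}\|n-n_\infty\|_{L^6}^{1/2}$, letting the exponential decay absorb the polynomial growth. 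The trade-off: your route gives a time-uniform regularity statement that is stronger than needed, but the step you yourself flag as the ``main obstacle'' --- uniform-in-$t$ and uniform-in-$\varepsilon$ H\"older constants for a divergence-form parabolic operator with only $L^6$ drift and Neumann data --- is asserted rather than proved, and it is by far the most delicate ingredient of your argument. The paper's alternative is deliberately designed to avoid exactly this machinery: the $W^{1,6}$ energy estimate is a short explicit computation using only $\|\Delta(\psi+V_n)\|_{L^\infty}\le C(M,K)$, and polynomial growth suffices because it is beaten by any exponential. If you want your version to be complete, you would need to supply the local parabolic H\"older estimate on each window $[t-1,t]$ together with the verification that its constants depend only on $M$, $K$, $V_n$, $V_p$, and $\Omega$; as written, that step is a placeholder, whereas everything else in your proposal is sound.
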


The remainder of this article is devoted to the proofs of the various statements above. Section \ref{secsolution} collects the proofs of Propositions \ref{propglobalsolution} and \ref{propequilibrium}. The proof of Theorem \ref{theoremeepinequality} along with the necessary prerequisites is contained in Section \ref{seceep}, while the results on exponential convergence to equilibrium are proven in Section \ref{secconv}. A brief section providing an outlook to future research concludes the paper.

\section{Global solution and equilibrium state}
\label{secsolution}
\begin{proof}[Proof of Proposition \ref{propglobalsolution}]
	The existence of such a unique global solution $(n, p, n_{tr}, \psi)$ as well as $(n, p) \in L^2(0,T; H^1(\Omega))$ for all $T>0$ uniformly for $\varepsilon \in (0,\varepsilon_0]$ and $0 \leq n_{tr} \leq 1$ are a consequence of \cite[Lemma 3.1]{GMS07}.
	The uniform-in-time $L^\infty$ bounds for $n$ and $p$ follow similar to \cite[Lemma 4.1]{DFM08}, where a Nash--Moser-type iteration for $L^r$ norms, $r \geq 1$, of $n$ and $p$ has been employed. But as the coupling to Poisson's equation is missing in \cite{DFM08}, we have to slightly modify the line of arguments.
	
	The evolution of the $L^{r+1}$ norm, $r \geq 1$, of $n$ and $p$ can be reformulated as
	\begin{align*}
	\frac{d}{dt} &\int_\Omega \big( n^{r+1} + p^{r+1} \big) \, dx = (r+1) \int_\Omega \Big( -r n^{r-1} \nabla n \cdot \big(\nabla n + n \nabla (\psi + V_n) \big) \\
	&- r p^{r-1} \nabla p \cdot \big(\nabla p + p \nabla (-\psi + V_p) \big) + n^r R_n + p^r R_p \Big) dx \\
	\leq&- \frac{4r}{r+1} \int_\Omega \Big| \nabla n^\frac{r+1}{2} \Big|^2 \, dx -\frac{4r}{r+1} \int_\Omega \Big| \nabla p^\frac{r+1}{2} \Big|^2 \, dx + r \int_\Omega n^{r+1} \Delta (\psi + V_n) \, dx \\
	&+ r \int_\Omega p^{r+1} \Delta (-\psi + V_p) \, dx + (r+1) \int_\Omega \big( n^r R_n + p^r R_p \big) \, dx.
	\end{align*}
	To the last term, we apply the estimate $(r+1) n^r \leq \frac1r + 2 r n^{r+1}$ which follows from Young's inequality $ab \leq \frac{1}{q} a^q + \frac{1}{s} b^s$ with $a \coleq (\frac{1}{r+1})^\frac{1}{r+1}$, $b \coleq (\frac{1}{r+1})^{-\frac{1}{r+1}} n^r$, $q \coleq r+1$, and $s \coleq \frac{r+1}{r}$.
	As a consequence of $-\lambda \Delta \psi = n - p + \varepsilon n_{tr} - D$, $(n^{r+1} - p^{r+1})(n-p) \geq 0$, $|R_n| \leq C 
	(1 + n)$, and $|R_p| \leq C 
	(1 + p)$, we then deduce
	\begin{align}
	\label{eqnashmoser1}
	\frac{d}{dt} \int_\Omega \big( n^{r+1} + p^{r+1} \big) \, dx \leq &-\frac{4r}{r+1} \int_\Omega \Big| \nabla n^\frac{r+1}{2} \Big|^2 \, dx -\frac{4r}{r+1} \int_\Omega \Big| \nabla p^\frac{r+1}{2} \Big|^2 \, dx \\
	&+ \wh C r \int_\Omega \big( n^{r+1} + p^{r+1} \big) \, dx + \frac{\wh C}{r} \nonumber
	\end{align}
	with a constant $\wh C > 0$ depending on 
	$\varepsilon_0$ but not on $r$. One can now proceed as in \cite[Lemma 4.1]{DFM08}. For completeness, we briefly collect the main arguments below and refer to \cite{DFM08} for the details. By utilizing the Gagliardo--Nirenberg-type inequality $\| f \|_{L^2(\Omega)} \leq C_{GN} \| f \|_{L^1(\Omega)}^\frac{2}{5} \| f \|_{H^1(\Omega)}^\frac{3}{5}$ for $f \coleq n^\frac{r+1}{2}$ and $f \coleq p^\frac{r+1}{2}$, one derives 
	\begin{multline}
	\label{eqnashmoser2}
	\int_\Omega \big( n^{r+1} + p^{r+1} \big) \, dx \\ \leq \delta \int_\Omega \bigg( \Big| \nabla n^\frac{r+1}{2} \Big|^2 + \Big| \nabla p^\frac{r+1}{2} \Big|^2 \bigg) dx + \frac{\wt C}{\delta} \Bigg( \int_\Omega \Big( n^\frac{r+1}{2} + p^\frac{r+1}{2} \Big) \, dx \Bigg)^2
	\end{multline}
	where $\wt C > 0$ is a constant independent of $r$ and $\delta > 0$. We now introduce $\lambda_k \coleq 2^k - 1$ for $k \geq 1$ and set $r \coleq \lambda_k$. Choosing a sufficiently small constant $A > 0$ and defining $\delta_k \coleq \frac{A}{\lambda_k}$ results in 
	\[
	\delta_k \big(\wh C \lambda_k + \delta_k \big) \leq \frac{4 \lambda_k}{\lambda_k + 1}
	\]
	for all $k \geq 1$. By multiplying \eqref{eqnashmoser2} with $\wh C \lambda_k + \delta_k$ and by combining the result with \eqref{eqnashmoser1}, we arrive at 
	\begin{multline*}
	\frac{d}{dt} \int_\Omega \big( n^{\lambda_k + 1} + p^{\lambda_k + 1} \big) \, dx \leq - \delta_k \int_\Omega \big( n^{\lambda_k + 1} + p^{\lambda_k + 1} \big) \, dx \\
	+ B \lambda_k (\lambda_k + \delta_k) \sup_{0 \leq \tau \leq t} \Bigg( \int_\Omega \Big( n^\frac{r+1}{2} + p^\frac{r+1}{2} \Big) \, dx \Bigg)^2 + \frac{\wh C}{\lambda_k}
	\end{multline*}
	with the constant $B \coleq \frac{\wh C \wt C}{A}$. The uniform $L^\infty$ bounds on $n$ and $p$ now follow from \cite[Lemma 4.2]{DFM08}.
	
	As, in particular, $\| n(t) - p(t) + \varepsilon n_{tr}(t) - D \|_{L^2(\Omega)}$ is uniformly bounded in $t \geq 0$, we conclude that $\| \psi(t) \|_{H^2(\Omega)}$ is uniformly bounded in time by applying standard elliptic regularity theory (see e.g.\ \cite[Chap. IV. \textsection 2. Theorem 4]{Mik80}). The announced bound on $\| \psi(t) \|_{C(\ol \Omega)}$ follows from the embedding $H^2(\Omega) \hookrightarrow C(\ol \Omega)$ valid in $\mathbb R^3$.
	
	The regularity $\partial_t n, \partial_t p \in L^2(0, T; H^{1}(\Omega)^\ast)$ and, hence, $n, p \in W_2(0, T) \hookrightarrow C([0, T], L^2(\Omega))$ uniformly for $\varepsilon \in (0,\varepsilon_0]$ is easily inferred from the corresponding bounds on $J_n, J_p \in L^2((0, T) \times \Omega)$ and $R_n, R_p \in L^\infty((0, T) \times \Omega)$. Likewise, $n_{tr} \in C([0, T], L^\infty(\Omega))$ and $\psi \in C([0, T], H^2(\Omega))$, where both inclusions hold true uniformly for $\varepsilon \in (0,\varepsilon_0]$.
	
	For showing the upper and lower bound on $n_{tr}$ in \eqref{eqlowerupperboundntr}, we multiply the third equation in \eqref{eqsystem} with $\tau_p$ and observe that
	\[
	\varepsilon \partial_t (\tau_p n_{tr}) \geq 1 - \rho n_{tr}
	\]
	holds true with a constant $\rho(M) > 1$ due to $\| p \|_{L^\infty(\Omega)} \leq M$.
	We now distinguish the following three cases for all $t \geq 0$ and a.e.\ $x \in \Omega$: $n_{tr}(t, x) \geq \tfrac{1}{\rho}$, $n_{tr}(t, x) \in [\frac{1}{2 \rho}, \tfrac{1}{\rho})$, and $n_{tr}(t, x) < \tfrac{1}{2 \rho}$. In the first case, $\partial_t (\tau_p n_{tr}(t,x)) \leq 0$, while in the second case $\partial_t (\tau_p n_{tr}(t,x)) > 0$. And in the third case, $\partial_t (\tau_p n_{tr}(t,x)) > \tfrac{1}{2 \varepsilon_0}$. Defining $t_0 \coleq \tfrac{\varepsilon_0 \tau_p}{\rho}$, this ensures 
	\begin{equation}
	\label{eqlowerboundntr}
	\tau_p n_{tr}(t,x) \geq \frac{t}{2 \varepsilon_0}, \quad t \in [0, t_0], \qquad \mbox{and} \qquad  n_{tr}(t,x) \geq \frac{1}{2 \rho}, \quad t \geq t_0.
	\end{equation}
	The upper bound on $n_{tr}$ follows by applying the same arguments to $\tau_n(1 - n_{tr})$.
	
	Concerning the bounds on $n$ and $p$ in \eqref{eqlowerboundnp}, we follow the lines in \cite{FK20} and concentrate on the arguments for $n$ as the result for $p$ can be derived analogously. 
	For simplicity, we set w.l.o.g.\ $\tau_n = \tau_p = 1$
	in the following calculations.	
	The temporal derivative of $n$ is then bounded from below by 
	\begin{align}
	\label{eqdtw}
	\partial_t n \geq \nabla \cdot \big( \nabla n + n \nabla (\psi + V_n) \big) + n_{tr} - \alpha n
	\end{align}
	with a constant $\alpha > 0$. 
	Employing 
	the no-flux boundary conditions from \eqref{eqboundaryconditions}, we first test \eqref{eqdtw} with $\bigl( n - \mu_1 t^2 \bigr)_-$ for $t \in [0, t_0]$ where $\mu_1 > 0$ is a constant specified below and where we abbreviate $(\cdot)_{-} \coleq \min\{\cdot,0\}$. This entails
	\begin{align*}
	\frac{d}{dt} \frac{1}{2} &\int_{\Omega} \Bigl(n - \mu_1 t^2\Bigr)_{-}^2 \, dx = \int_{\Omega} \Bigl(n - \mu_1 t^2\Bigr)_{-} \big( \partial_t n - 2\mu_1 t \big) \, dx	\\
	\leq &\int_{\Omega} \Bigl(n-\mu_1 t^2\Bigr)_{-} \Big( \nabla \cdot \big(\nabla n + n \nabla (\psi + V_n) \big) + n_{tr} - \alpha n - 2 \mu_1 t \Big) \, dx \\
	= &- \int_{\Omega} \mathbb{1}_{n \leq \mu_1 t^2} \nabla n \cdot \big( \nabla n + (n - \mu_1 t^2) \nabla (\psi + V_n) + \mu_1 t^2 \nabla (\psi + V_n) \big) \, dx \\
	&+ \int_{\Omega} \Big( n-\mu_1 t^2 \Big)_{-} \big(n_{tr} - \alpha n - 2\mu_1 t \big) \, dx.
	\end{align*}
	Omitting the first term on the right hand side, we further derive 
	\begin{align*}
	\frac{d}{dt} \frac{1}{2} \int_{\Omega} \Bigl(n - \mu_1 t^2\Bigr)_{-}^2 \, dx \leq &-\frac12 \int_\Omega \nabla \bigg[ \Bigl(n-\mu_1 t^2\Bigr)_{-}^2 \bigg] \cdot \nabla (\psi + V_n) \, dx \\ &- \int_\Omega \nabla \bigg[ \Bigl(n-\mu_1 t^2\Bigr)_{-} \bigg] \cdot \mu_1 t^2 \nabla (\psi + V_n) \, dx \\ &+ \int_{\Omega} \Big( n-\mu_1 t^2 \Big)_{-} \big(n_{tr} - \alpha n - 2\mu_1 t \big) \, dx.
	\end{align*}
	We now integrate by parts utilizing $\hat n \cdot \nabla \psi = \hat n \cdot \nabla V_n = 0$ on $\partial \Omega$. Due to the bound $n_{tr}(t,x) \geq \tfrac{t}{2 \varepsilon_0}$ on the considered interval $t \in [0, t_0]$, we obtain 
	\begin{multline*}
	\frac{d}{dt} \frac{1}{2} \int_{\Omega} \Bigl(n - \mu_1 t^2\Bigr)_{-}^2 \, dx \leq \frac12 \int_\Omega \Big( n-\mu_1 t^2 \Big)_{-}^2 \| \Delta (\psi + V_n) \|_{L^\infty(\Omega)} \, dx \\
	+ \int_{\Omega} \Big( n-\mu_1 t^2 \Big)_{-} \Big(\frac{1}{2 \varepsilon_0} - \alpha \mu_1 t_0 - 2 \mu_1 - \mu_1 t_0 \| \Delta (\psi + V_n) \|_{L^\infty(\Omega)} \Big) t \, dx.
	\end{multline*}
	Choosing $\mu_1 > 0$ according to $\mu_1 \bigl( 2 + \alpha t_0 + t_0 \| \Delta (\psi + V_n) \|_{L^\infty(\Omega)} \bigr) \leq \tfrac{1}{2 \varepsilon_0}$, we deduce
	\[
	\frac{d}{dt} \int_{\Omega} \Bigl(n-\mu_1 t^2\Bigr)_{-}^2 \, dx 
	\leq \|\Delta (\psi + V_n) \|_{L^\infty(\Omega)} \int_\Omega \Bigl(n-\mu_1 t^2\Bigr)_{-}^2 \, dx.
	\]
	Because of $\int_{\Omega} \bigl(n(0,x)\bigr)_{-}^2 \, dx = 0$, we derive $\int_{\Omega} \big(n - \mu_1 t^2 \big)_{-}^2 \, dx = 0$ for all $t \in [0, t_0]$ by applying a Gronwall argument. We thus arrive at $n(t,x) \geq \mu_1 t^2$ for all $t \in [0, t_0]$ and a.e.\ $x\in\Omega$.
	
	In the situation $t \geq t_0$, we test \eqref{eqdtw} with $( n - \mu_2 )_-$ where $\mu_2 > 0$ is another constant to be specified. As above, we calculate
	\begin{align*}
	\frac{d}{dt} \frac{1}{2} &\int_{\Omega} \bigl(n - \mu_2\bigr)_{-}^2 \, dx = \int_{\Omega} \bigl(n - \mu_2\bigr)_{-} \partial_t n \, dx	\\
	\leq &\int_{\Omega} \bigl(n-\mu_2\bigr)_{-} \Big( \nabla \cdot \big(\nabla n + n \nabla (\psi + V_n) \big) + n_{tr} - \alpha n \Big) \, dx \\
	= &- \int_{\Omega} \mathbb{1}_{n \leq \mu_2} \nabla n \cdot \big( \nabla n + (n - \mu_2) \nabla (\psi + V_n) + \mu_2 \nabla (\psi + V_n) \big) \, dx \\
	&+ \int_{\Omega} \big( n-\mu_2 \big)_{-} \big(n_{tr} - \alpha n \big) \, dx.
	\end{align*}
	The same reasoning as above gives rise to 
	\begin{multline*}
	\frac{d}{dt} \frac{1}{2} \int_{\Omega} \bigl(n - \mu_2\bigr)_{-}^2 \, dx \leq -\frac12 \int_\Omega \nabla \Big[ \bigl(n-\mu_2\bigr)_{-}^2 \Big] \cdot \nabla (\psi + V_n) \, dx \\ - \int_\Omega \nabla \Big[ \bigl(n-\mu_2\bigr)_{-} \Big] \cdot \mu_2 \nabla (\psi + V_n) \, dx + \int_{\Omega} \big( n-\mu_2 \big)_{-} \big(n_{tr} - \alpha n \big) \, dx.
	\end{multline*}
	For $t \geq t_0$, we have the lower bound $n_{tr}(t,x) \geq \frac{1}{2 \rho}$, which yields
	\begin{multline*}
	\frac{d}{dt} \frac{1}{2} \int_{\Omega} \big( n - \mu_2 \big)_{-}^2 \, dx \leq \frac12 \int_\Omega \big( n - \mu_2 \big)_{-}^2 \| \Delta (\psi + V_n) \|_{L^\infty(\Omega)} \, dx \\
	+ \int_{\Omega} \big( n - \mu_2 \big)_{-} \Big( \frac{1}{2\rho} - \alpha \mu_2 - \mu_2 \| \Delta (\psi + V_n) \|_{L^\infty(\Omega)} \Big) \, dx.
	\end{multline*}
	If we impose the conditions $\mu_2 (\alpha + \| \Delta (\psi + V_n) \|_{L^\infty(\Omega)}) \leq \frac{1}{2 \rho}$ and $\mu_2 \leq \mu_1 t_0^2$ on $\mu_2 > 0$, we infer
	\[
	\frac{d}{dt} \int_{\Omega} \big( n - \mu_2 \big)_{-}^2 \, dx \leq \|\Delta (\psi + V_n)\|_{L^\infty(\Omega)} \int_{\Omega} \big( n - \mu_2 \big)_{-}^2 \, dx
	\]
	as well as $\int_{\Omega} \big( n(t_0,x) - \mu_2 \big)_{-}^2 \, dx = 0$. Finally, Gronwall's lemma guarantees that $\int_{\Omega} \big( n - \mu_2 \big)_{-}^2 \, dx = 0$ and, hence, $n(t,x) \geq \mu_2$ for all $t \geq t_0$ and a.e.\ $x \in \Omega$.
\end{proof}

\begin{proof}[Proof of Proposition \ref{propequilibrium}]
	As the entropy production vanishes at the stationary state $(n_\infty, p_\infty, n_{tr,\infty}, \psi_\infty)$, straightforward calculations show that $J_n = J_p = R_n = R_p = 0$ yields the representations for $n_\infty$ and $p_\infty$ as well as the two expressions for $n_{tr,\infty}$ in \eqref{eqeqstates}. The relation $n_\ast p_\ast = n_0 p_0$ results from a combination of the formulas arising from $R_n = 0$ and $R_p = 0$, while the fact that the conservation law is also fulfilled in the equilibrium follows from integrating Poisson's equation in \eqref{eqeqsystem}. Note that $n_\ast$ (and hence $p_\ast$) is uniquely determined from the second relation in \eqref{eqeqconstants} as its left hand side is strictly monotonously increasing and surjective from $(0, \infty)$ to $(-\infty,\infty)$ as a function of $n_\ast$. The identities in \eqref{eqntrrelations} are equivalent versions of $R_n(n_\infty, n_{tr,\infty}) = 0$ and $R_p(p_\infty, n_{tr,\infty}) = 0$. 
	
	Next, we establish the existence of the limiting potential $\psi_\infty$. A technical difficulty stems from the 
	fact that the constant $n_\ast$ (or equally $p_\ast$) depends non-locally on $\psi_\infty$, see \eqref{eqeqconstants}.
	However, this can be avoided by substituting $\wt{\psi_\infty} \coleq \psi_\infty - \ln n_\ast$ and rewriting $- \lambda \, \Delta \psi_\infty = n_\infty - p_\infty + \varepsilon n_{tr,\infty} - D$ as 
	\begin{align}
	\label{eqeqpotential}
	- \lambda \, \Delta \wt{\psi_\infty} - e^{-\wt{\psi_\infty} - V_n} + n_0 p_0 e^{\wt{\psi_\infty} - V_p} - \frac{\varepsilon}{1 + n_0 e^{\wt{\psi_\infty}}} = - D.
	\end{align}
	We now aim to apply 
	\cite[Theorem 4.8]{T10} to \eqref{eqeqpotential}, which we further reformulate as
	\begin{align}
	\label{eqeqpotentialabstract}
	- \lambda \, \Delta \wt{\psi_\infty} + \min\{ e^{-V_n}, n_0 p_0 e^{-V_p} \} \wt{\psi_\infty} + d(\cdot, \wt{\psi_\infty}) = - D
	\end{align}
	where 
	\begin{align*}
	d(x,y) \coleq - e^{-y-V_n} + n_0 p_0 e^{y-V_p} - \frac{\varepsilon}{1 + n_0 e^{y}} - \min\{ e^{-V_n}, n_0 p_0 e^{-V_p} \} y.
	\end{align*}
The structure of \eqref{eqeqpotentialabstract} is suitable to apply
\cite[Theorem 4.8]{T10} for the existence of a unique continuous solution
provided that $d$ is monotone increasing w.r.t.\ $y$: Indeed, direct computations show 
\begin{equation*}
e^{-y-V_n} + n_0 p_0 e^{y-V_p} \ge 2 \sqrt{n_0 p_0} e^{-\frac{V_p + V_n}{2}}, \qquad \forall y\in\mathbb{R},
\end{equation*}
(where the lower bound is attained at the unique minimum $e^{y}=e^{\frac{V_p-V_n}{2}}/\sqrt{n_0 p_0}$).
Hence, we estimate independently of $\varepsilon$ 
\begin{align*}
\partial_y d(x,y) &\ge e^{-y-V_n} + n_0 p_0 e^{y-V_p}  - \min\{ e^{-V_n}, n_0 p_0 e^{-V_p} \} \\
&\ge  2 e^{-\frac{V_n}{2}} \sqrt{n_0 p_0} e^{-\frac{V_p}{2}} - \min\{ e^{-V_n}, n_0 p_0 e^{-V_p} \} > 0, 
\end{align*}
and therefore strict monotonicity of $d$ w.r.t.\ $y$ follows.

%
	
	As a consequence, \eqref{eqeqpotentialabstract} admits a unique solution $\wt{\psi_\infty} \in H^1(\Omega) \cap L^\infty(\Omega)$, which is continuous on $\ol \Omega$ and bounded via
	\[
	\| \wt{\psi_\infty} \|_{H^1(\Omega)} + \| \wt{\psi_\infty} \|_{C(\ol \Omega)} \leq \wt{K_\infty}
	\]
	where the constant $\wt{K_\infty}$ is independent of $\varepsilon$. Going back to $\psi_\infty$, the constraint $\ol{\psi_\infty} = 0$ implies 
	\[
	n_\ast = e^{-\int_\Omega \wt{\psi_\infty} \, dx},
	\]
	which in turn uniquely determines $\psi_\infty = \wt{\psi_\infty} - \int_\Omega \wt{\psi_\infty} \, dx$. The bounds on $\wt{\psi_\infty}$ directly transfer to $\psi_\infty$.
	
	As in \cite{FK18}, we verify the bounds \eqref{eqeqbounds} by solving the two equations in \eqref{eqeqconstants} for $n_\ast > 0$ abbreviating $V_\infty \coleq \max \{ \| V_n \|_{L^\infty(\Omega)}, \| V_p \|_{L^\infty(\Omega)} \}$:
	\begin{align*}
	n_\ast &= \frac{\ol{D - \varepsilon n_{tr,\infty}}}{2\ol{e^{-\psi_\infty-V_n}}} + \sqrt{\frac{\ol{D - \varepsilon n_{tr,\infty}}^2}{4\ol{e^{-\psi_\infty-V_n}}^2} + n_0 p_0 \frac{\ol{e^{\psi_\infty-V_p}}}{\ol{e^{-\psi_\infty-V_n}}}} \\
	&\leq e^{K_\infty + V_\infty} (\sqrt{n_0p_0} + \varepsilon_0 + | \ol D |).
	\end{align*}
	We stress that the same bound is valid also for $p_\ast > 0$,
	and that the upper and lower bounds on $n_\infty$, $p_\infty$ and $n_{tr,\infty}$ are a consequence of the bounds on $n_\ast$ and $p_\ast$ as well as $n_\ast p_\ast = n_0 p_0$. Finally, the estimate
	\[
	\| \psi_\infty \|_{H^2(\Omega)} \leq C \| n_\infty - p_\infty + \varepsilon n_{tr,\infty} - D \|_{L^2(\Omega)} \leq C(K_\infty)
	\]
	ensures the higher regularity of $\psi_\infty$.
\end{proof}

\section{Derivation of an EEP inequality}
\label{seceep}
As an auxiliary result, we first derive a convenient expression for the entropy relative to the equilibrium.
\begin{lemma}
\label{lemmarelativeentropy}
The entropy relative to the equilibrium equals
\begin{align*}
&E(n,p,n_{tr},\psi) - E(n_\infty,p_\infty,n_{tr,\infty},\psi_\infty) \\
&\qquad = \int_{\Omega} \bigg( n \ln \frac{n}{n_\infty} - (n-n_\infty) + p \ln \frac{p}{p_\infty} - (p-p_\infty) \\
&\qquad\quad + \frac{\lambda}{2} \big| \nabla (\psi - \psi_\infty) \big|^2 + \varepsilon \int_{n_{tr,\infty}}^{n_{tr}} \left( \ln \frac{s}{1-s} - \ln \frac{n_{tr,\infty}}{1-n_{tr,\infty}} \right) ds \bigg) dx.
\end{align*}
\end{lemma}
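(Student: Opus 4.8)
The plan is to expand $E(n,p,n_{tr},\psi)-E(n_\infty,p_\infty,n_{tr,\infty},\psi_\infty)$ directly from the definition \eqref{eqentropy}, subtract the asserted right-hand side, and check that what remains vanishes. It is natural to split the integrand into four pieces corresponding to the four blocks in \eqref{eqentropy}: the electron piece built from $n$, the hole piece built from $p$, the trap piece built from $n_{tr}$, and the electrostatic piece built from $\psi$.

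For the electron piece a one-line rearrangement of logarithms gives
\[
\Big[ n\ln\tfrac{n}{n_0\mu_n}-(n-n_0\mu_n) \Big]-\Big[ n_\infty\ln\tfrac{n_\infty}{n_0\mu_n}-(n_\infty-n_0\mu_n) \Big]-\Big[ n\ln\tfrac{n}{n_\infty}-(n-n_\infty) \Big]=(n-n_\infty)\ln\tfrac{n_\infty}{n_0\mu_n},
\]
and the hole piece is treated identically with $\tfrac{p_\infty}{p_0\mu_p}$ in place of $\tfrac{n_\infty}{n_0\mu_n}$. For the trap piece one uses $\int_{1/2}^{n_{tr}}=\int_{1/2}^{n_{tr,\infty}}+\int_{n_{tr,\infty}}^{n_{tr}}$, so that the $n_{tr}$-contribution of the difference collapses to $\varepsilon\,(n_{tr}-n_{tr,\infty})\ln\tfrac{n_{tr,\infty}}{1-n_{tr,\infty}}$. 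Finally, expanding the square yields $\tfrac{\lambda}{2}|\nabla\psi|^2-\tfrac{\lambda}{2}|\nabla\psi_\infty|^2-\tfrac{\lambda}{2}|\nabla(\psi-\psi_\infty)|^2=\lambda\,\nabla\psi_\infty\cdot\nabla(\psi-\psi_\infty)$. Hence it remains to prove
\[
\int_\Omega\Big((n-n_\infty)\ln\tfrac{n_\infty}{n_0\mu_n}+(p-p_\infty)\ln\tfrac{p_\infty}{p_0\mu_p}+\varepsilon(n_{tr}-n_{tr,\infty})\ln\tfrac{n_{tr,\infty}}{1-n_{tr,\infty}}\Big)dx+\lambda\int_\Omega\nabla\psi_\infty\cdot\nabla(\psi-\psi_\infty)\,dx=0.
\]

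Into this identity I would insert the equilibrium representations \eqref{eqeqstates}, which give $\ln\tfrac{n_\infty}{n_0\mu_n}=\ln\tfrac{n_\ast}{n_0}-\psi_\infty$, $\ln\tfrac{p_\infty}{p_0\mu_p}=\ln\tfrac{p_\ast}{p_0}+\psi_\infty$, and (from either formula for $n_{tr,\infty}$, consistently thanks to $n_\ast p_\ast=n_0p_0$) $\ln\tfrac{n_{tr,\infty}}{1-n_{tr,\infty}}=\ln\tfrac{n_\ast}{n_0}-\psi_\infty$. This separates the left-hand side into constant-coefficient terms and $\psi_\infty$-weighted terms. The constant-coefficient part equals $\ln\tfrac{n_\ast}{n_0}\int_\Omega\big((n-n_\infty)+\varepsilon(n_{tr}-n_{tr,\infty})\big)dx+\ln\tfrac{p_\ast}{p_0}\int_\Omega(p-p_\infty)\,dx$, and it vanishes: the conservation law \eqref{eqconservationlaw} holds for $(n,p,n_{tr})$ by the admissibility assumption underlying the definition of $E$ and for the equilibrium by Proposition \ref{propequilibrium}, hence $\int_\Omega\big((n-n_\infty)+\varepsilon(n_{tr}-n_{tr,\infty})\big)dx=\int_\Omega(p-p_\infty)\,dx$, whereupon $n_\ast p_\ast=n_0p_0$ from \eqref{eqeqconstants} makes the two logarithmic coefficients cancel.

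It then remains to kill the $\psi_\infty$-weighted terms, which sum to $-\int_\Omega\big((n-p+\varepsilon n_{tr})-(n_\infty-p_\infty+\varepsilon n_{tr,\infty})\big)\psi_\infty\,dx+\lambda\int_\Omega\nabla\psi_\infty\cdot\nabla(\psi-\psi_\infty)\,dx$. Writing the bracket as the difference of the right-hand sides of the two Poisson problems (add and subtract $D$) and testing the weak formulation \eqref{eqpsi} for $\psi$ and for $\psi_\infty$ against the admissible test function $\psi_\infty\in H^1(\Omega)$, the first integral becomes $-\lambda\int_\Omega\nabla\psi\cdot\nabla\psi_\infty\,dx+\lambda\int_\Omega|\nabla\psi_\infty|^2\,dx=-\lambda\int_\Omega\nabla\psi_\infty\cdot\nabla(\psi-\psi_\infty)\,dx$, which cancels the second by symmetry of the Dirichlet form, and the claimed identity follows. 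I do not expect a genuine obstacle: the argument is essentially bookkeeping, the only ingredients of substance being the relation $n_\ast p_\ast=n_0p_0$, the charge-conservation constraint, and the elementary remark that every pairing above is well defined since $n,p\in L^2(\Omega)$, $n_{tr},\psi_\infty\in L^\infty(\Omega)$, and $\psi,\psi_\infty\in H^1(\Omega)$, so that in particular $\psi_\infty$ is an admissible test function in \eqref{eqpsi}.
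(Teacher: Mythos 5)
Your proof is correct and follows essentially the same route as the paper: expand the difference of entropies, use the equilibrium representations $\ln\tfrac{n_\infty}{n_0\mu_n}=\ln\tfrac{n_\ast}{n_0}-\psi_\infty$ (and its analogues), cancel the constant logarithmic coefficients via the charge-conservation constraint together with $n_\ast p_\ast=n_0p_0$, and absorb the $\psi_\infty$-weighted terms by testing Poisson's equation with $\psi_\infty$. The only difference is presentational (you subtract the target identity and show the remainder vanishes, whereas the paper transforms the left-hand side step by step), so no further comment is needed.
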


\begin{proof}
According to the definition of $E(n,p,n_{tr},\psi)$, one has 
\begin{align*}
&E(n,p,n_{tr}, \psi) - E(n_\infty, p_\infty, n_{tr,\infty}, \psi_\infty) \\
&= \int_\Omega \bigg( n \ln \frac{n}{n_0 \mu_n}\!-\!n_\infty \ln \frac{n_\infty}{n_0 \mu_n}\!-\!(n\!-\!n_\infty) + p \ln \frac{p}{p_0 \mu_p}\!-\!p_\infty \ln \frac{p_\infty}{p_0 \mu_p}\!-\!(p\!-\!p_\infty) \\
&\quad + \frac{\lambda}{2} \big( |\nabla \psi|^2 - |\nabla \psi_\infty|^2 \big) + \varepsilon \int_{n_{tr,\infty}}^{n_{tr}} \ln \frac{s}{1-s} ds \bigg) dx.
\end{align*}
We rewrite the first integrand as
$
n \ln \frac{n}{n_0 \mu_n} = n \ln \frac{n}{n_\infty} + n \ln \frac{n_\infty}{n_0 \mu_n}
$
and use $\frac{n_\infty}{n_0 \mu_n} = \frac{n_\ast}{n_0} e^{-\psi_\infty}$ to find
\begin{multline*}
\int_\Omega \bigg( n \ln \frac{n}{n_0 \mu_n} - n_\infty \ln \frac{n_\infty}{n_0 \mu_n} - (n - n_\infty) \bigg) dx \\ = \int_\Omega \bigg( n \ln \frac{n}{n_\infty} - (n - n_\infty) + (n - n_\infty) \Big(\ln \frac{n_\ast}{n_0} - \psi_\infty \Big) \bigg) dx.
\end{multline*}
Together with an analogous calculation for the $p$-terms, we obtain
\begin{align*}
&E(n,p,n_{tr}, \psi) - E(n_\infty, p_\infty, n_{tr,\infty}, \psi_\infty) \\
&\quad = \int_\Omega \bigg( n \ln \frac{n}{n_\infty} - (n - n_\infty) + p \ln \frac{p}{p_\infty} - (p - p_\infty) \\
&\qquad + (n - n_\infty) \Big(\ln \frac{n_\ast}{n_0} - \psi_\infty \Big) + (p - p_\infty) \Big(\ln \frac{p_\ast}{p_0} + \psi_\infty \Big) \\
&\qquad + \frac{\lambda}{2} |\nabla \psi|^2 - \frac{\lambda}{2} |\nabla \psi_\infty|^2 + \varepsilon \int_{n_{tr,\infty}}^{n_{tr}} \ln \frac{s}{1-s} \, ds \bigg) dx.
\end{align*}
We now employ the conservation law $\ol p - \ol{p_\infty} = \ol n - \ol{n_\infty} + \varepsilon (\ol{n_{tr}} - \ol{n_{tr,\infty}})$, the formula $n_\ast p_\ast = n_0 p_0$ and the representation $\frac{p_\ast}{p_0} = \frac{1 - n_{tr,\infty}}{n_{tr,\infty}} e^{-\psi_\infty}$ to derive
\begin{align*}
&(\ol n - \ol{n_\infty}) \ln \frac{n_\ast}{n_0} + (\ol p - \ol{p_\infty}) \ln \frac{p_\ast}{p_0} \\
&\quad = (\ol n - \ol{n_\infty}) \ln \frac{n_\ast p_\ast}{n_0 p_0} + \varepsilon \int_\Omega (n_{tr} - n_{tr,\infty}) \ln \frac{p_\ast}{p_0} \, dx \\
&\quad = \varepsilon \int_\Omega (n_{tr} - n_{tr,\infty}) \bigg(\ln \frac{1 - n_{tr,\infty}}{n_{tr,\infty}} - \psi_\infty \bigg) dx \\
&\quad = -\varepsilon \int_\Omega \bigg( (n_{tr} - n_{tr,\infty}) \psi_\infty + \int_{n_{tr,\infty}}^{n_{tr}} \ln \frac{n_{tr,\infty}}{1 - n_{tr,\infty}} \, ds \bigg) dx.
\end{align*}
The relative entropy now reads
\begin{align*}
&E(n,p,n_{tr}, \psi) - E(n_\infty, p_\infty, n_{tr,\infty}, \psi_\infty) \\
&\quad = \int_\Omega \bigg( n \ln \frac{n}{n_\infty} - (n - n_\infty) + p \ln \frac{p}{p_\infty} - (p - p_\infty) \\
&\qquad + \frac{\lambda}{2} |\nabla \psi|^2 - \frac{\lambda}{2} |\nabla \psi_\infty|^2 - \big(n - n_\infty - p + p_\infty + \varepsilon (n_{tr} - n_{tr,\infty})\big) \psi_\infty \\
&\qquad + \varepsilon \int_{n_{tr,\infty}}^{n_{tr}} \bigg( \ln \frac{s}{1-s} - \ln \frac{n_{tr,\infty}}{1-n_{tr,\infty}} \bigg) ds \bigg) dx.
\end{align*}
Poisson's equation $n - n_\infty - p + p_\infty + \varepsilon (n_{tr} - n_{tr,\infty}) = -\lambda \Delta (\psi - \psi_\infty)$ and an integration by parts entail
\begin{align*}
&E(n,p,n_{tr}, \psi) - E(n_\infty, p_\infty, n_{tr,\infty}, \psi_\infty) \\
&\quad = \int_\Omega \bigg( n \ln \frac{n}{n_\infty} - (n - n_\infty) + p \ln \frac{p}{p_\infty} - (p - p_\infty) + \frac{\lambda}{2} |\nabla \psi|^2- \frac{\lambda}{2} |\nabla \psi_\infty|^2 \\
&\qquad - \lambda \nabla (\psi - \psi_\infty) \cdot \nabla \psi_\infty + \varepsilon \int_{n_{tr,\infty}}^{n_{tr}} \bigg( \ln \frac{s}{1-s} - \ln \frac{n_{tr,\infty}}{1-n_{tr,\infty}} \bigg) ds \bigg) dx.
\end{align*}
The claim now obviously follows from collecting the terms involving $\psi$ and $\psi_\infty$.
\end{proof}

Following ideas in \cite{GG96,FK18} and \cite{FK20}, we are able to bound the relative entropy essentially in terms of the squared $L^2$ distance between $(n, p, \sqrt{n_{tr}})$ and $(n_\infty, p_\infty, \sqrt{n_{tr,\infty}})$.

\begin{proposition}
\label{propeg}
There exists an explicit constant $c_1(K_\infty
) > 0$ satisfying
\begin{multline*}
E(n, p, n_{tr}, \psi) - E(n_\infty, p_\infty, n_{tr,\infty}, \psi_\infty) \\ 
\leq c_1 \int_\Omega \bigg( \frac{(n-n_\infty)^2}{n_\infty} + \frac{(p-p_\infty)^2}{p_\infty} + \varepsilon \big(\sqrt{n_{tr}} - \sqrt{n_{tr,\infty}} \big)^2 \bigg) \, dx
\end{multline*}
for all $\varepsilon > 0$ and all non-negative $n, p, n_{tr} \in L^2(\Omega)$, $n_{tr} \leq 1$ where $\psi \in H^1(\Omega)$ is the unique solution of \eqref{eqpsi} with $f = n - p + \varepsilon n_{tr} - D$ and $\ol \psi = 0$.
\end{proposition}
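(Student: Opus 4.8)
The plan is to start from the representation of the relative entropy provided by Lemma \ref{lemmarelativeentropy}, which exhibits $E(n,p,n_{tr},\psi)-E(n_\infty,p_\infty,n_{tr,\infty},\psi_\infty)$ as the sum of two Boltzmann contributions in $n$ and $p$, the Dirichlet-type term $\tfrac{\lambda}{2}\int_\Omega|\nabla(\psi-\psi_\infty)|^2\,dx$, and the trap integral $\varepsilon\int_\Omega\int_{n_{tr,\infty}}^{n_{tr}}\big(\ln\tfrac{s}{1-s}-\ln\tfrac{n_{tr,\infty}}{1-n_{tr,\infty}}\big)\,ds\,dx$. Each of these four pieces is estimated separately against the corresponding term on the right-hand side, and $c_1$ is then taken to be the largest of the constants that appear.

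The two Boltzmann terms are handled pointwise. The elementary inequality $x\ln x-x+1\le(x-1)^2$, valid for all $x\ge0$, applied with $x=n/n_\infty$ and multiplied by $n_\infty$, gives $n\ln\tfrac{n}{n_\infty}-(n-n_\infty)\le\tfrac{(n-n_\infty)^2}{n_\infty}$ almost everywhere, and likewise for $p$; integrating over $\Omega$ produces the $n$- and $p$-contributions with constant $1$. Note that no upper bound on $n$ or $p$ is required, only $n_\infty,p_\infty>0$ from Proposition \ref{propequilibrium}.

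For the trap term I would first record the identity $\int_{b}^{a}\big(\ln\tfrac{s}{1-s}-\ln\tfrac{b}{1-b}\big)\,ds=\Phi(a\,|\,b)+\Phi(1-a\,|\,1-b)$, where $\Phi(a\,|\,b):=a\ln\tfrac{a}{b}-a+b\ge0$; it follows by splitting $\ln\tfrac{s}{1-s}=\ln s-\ln(1-s)$ and integrating the two summands, the substitution $s\mapsto1-s$ turning the second into a copy of the first. With $a=n_{tr}\in[0,1]$ and $b=n_{tr,\infty}$, the bound $\Phi(a\,|\,b)\le\tfrac{(a-b)^2}{b}$ applied to both summands, combined with $n_{tr,\infty},1-n_{tr,\infty}\ge\mu_\infty$ from Proposition \ref{propequilibrium} and the elementary estimate $(a-b)^2\le4(\sqrt{a}-\sqrt{b})^2$ for $a,b\in[0,1]$, bounds the trap term by $\tfrac{8}{\mu_\infty}\,\varepsilon\int_\Omega(\sqrt{n_{tr}}-\sqrt{n_{tr,\infty}})^2\,dx$. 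Both summands $\Phi(n_{tr}\,|\,n_{tr,\infty})$ and $\Phi(1-n_{tr}\,|\,1-n_{tr,\infty})$ are genuinely needed in order to produce a squared quantity.

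It remains to control the Dirichlet term. Since $\psi$ and $\psi_\infty$ solve Poisson's equation with the same doping profile and homogeneous Neumann data, $w:=\psi-\psi_\infty$ is the weak solution of $-\lambda\Delta w=(n-n_\infty)-(p-p_\infty)+\varepsilon(n_{tr}-n_{tr,\infty})$ in $\Omega$ with $\overline{w}=0$ (using $\overline{\psi}=\overline{\psi_\infty}=0$). Testing this equation with $w$, integrating by parts so that the boundary term vanishes by the no-flux condition, and then using Cauchy--Schwarz together with Poincaré's inequality (which applies because $\overline{w}=0$) yields $\|\nabla w\|_{L^2(\Omega)}\le\tfrac{C_P}{\lambda}\,\|(n-n_\infty)-(p-p_\infty)+\varepsilon(n_{tr}-n_{tr,\infty})\|_{L^2(\Omega)}$, hence $\tfrac{\lambda}{2}\|\nabla w\|_{L^2(\Omega)}^2\le\tfrac{C_P^2}{2\lambda}\|(n-n_\infty)-(p-p_\infty)+\varepsilon(n_{tr}-n_{tr,\infty})\|_{L^2(\Omega)}^2$. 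Expanding the square with Young's inequality, estimating $\|n-n_\infty\|_{L^2(\Omega)}^2\le M_\infty\int_\Omega\tfrac{(n-n_\infty)^2}{n_\infty}\,dx$ and similarly for $p$, and using once more $(n_{tr}-n_{tr,\infty})^2\le4(\sqrt{n_{tr}}-\sqrt{n_{tr,\infty}})^2$ together with $\varepsilon\le\varepsilon_0$ (so that $\varepsilon^2\le\varepsilon_0\,\varepsilon$) bounds this piece by the right-hand side up to a constant depending only on $\lambda$, $M_\infty$ and $\varepsilon_0$, and hence, via Proposition \ref{propequilibrium}, only on $K_\infty$. Adding the four estimates finishes the proof. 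I expect this last term to be the main obstacle: keeping the constant uniform in (small) $\varepsilon$ forces one to exploit the $\varepsilon$-weight that is already present in the claimed right-hand side, rather than estimating $\varepsilon(n_{tr}-n_{tr,\infty})$ crudely in $L^2$; the trap-term identity of the third paragraph is the other ingredient that is not entirely routine.
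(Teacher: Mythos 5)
Your proof is correct, and its skeleton coincides with the paper's: both start from Lemma \ref{lemmarelativeentropy}, bound the Boltzmann terms pointwise via $\ln x\le x-1$, and control the Dirichlet term by testing Poisson's equation for $\psi-\psi_\infty$ with itself and invoking Poincar\'e (the paper absorbs $\tfrac{\lambda}{2}\|\nabla(\psi-\psi_\infty)\|^2$ via Young's inequality with $\delta=\lambda/L(\Omega)$ rather than dividing through, and it also silently replaces $\varepsilon^2$ by $\varepsilon$ where you more carefully write $\varepsilon^2\le\varepsilon_0\varepsilon$ -- same outcome). The one genuinely different ingredient is the trap term. The paper applies the mean value theorem twice: first to write $\int_{n_{tr,\infty}}^{n_{tr}}\ln\tfrac{s}{1-s}\,ds=(n_{tr}-n_{tr,\infty})\ln\tfrac{\theta}{1-\theta}$, then it must show via a separate "elementary argumentation" (using the uniform bound $2\ln 2$ on the integral) that the intermediate point $\theta(x)$ stays in $(\xi,1-\xi)$ before a second mean value step yields the factor $(\sigma(1-\sigma))^{-1}$. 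Your identity $\int_{b}^{a}\bigl(\ln\tfrac{s}{1-s}-\ln\tfrac{b}{1-b}\bigr)ds=\Phi(a\,|\,b)+\Phi(1-a\,|\,1-b)$ with $\Phi(a\,|\,b)=a\ln\tfrac{a}{b}-a+b$, combined with $\Phi(a\,|\,b)\le(a-b)^2/b$ and $n_{tr,\infty},1-n_{tr,\infty}\ge\mu_\infty$, reaches the same bound in one stroke with the explicit constant $8/\mu_\infty$, avoiding the localization of $\theta$ entirely and making transparent that the trap contribution is just a sum of two relative Boltzmann entropies. Both routes are valid; yours is arguably the cleaner of the two for this step.
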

\begin{proof}
Applying the elementary inequality $\ln x \leq x - 1$ for $x > 0$, we derive
\[
n \ln \frac{n}{n_\infty} - (n-n_\infty) \leq n \Big( \frac{n}{n_\infty} - 1 \Big) - n + n_\infty = \frac{(n-n_\infty)^2}{n_\infty}
\]
and an analogous estimate involving $p$ and $p_\infty$. Integration by parts with homogeneous Neumann conditions for $\psi$ and $\psi_\infty$ as well as $-\lambda \Delta (\psi - \psi_\infty) = (n - n_\infty) - (p - p_\infty) + \varepsilon (n_{tr} - n_{tr,\infty})$ yield
\begin{align*}
&\lambda \int_{\Omega} \left| \nabla (\psi-\psi_\infty)\right|^2 dx \\ 
&\quad = \int_\Omega \big( (n - n_\infty) - (p - p_\infty) + \varepsilon (n_{tr} - n_{tr,\infty}) \big) (\psi - \psi_\infty) \, dx \\
&\quad \leq \frac{1}{2} \bigg( \frac1\delta \| (n - n_\infty) - (p - p_\infty) + \varepsilon (n_{tr} - n_{tr,\infty}) \|^2 + \delta \| \psi - \psi_\infty \|^2 \bigg) \\
&\quad \leq \frac{3\LL}{2\lambda} \Big( \| n - n_\infty \|^2 + \| p - p_\infty \|^2 + \varepsilon \| n_{tr} - n_{tr,\infty} \|^2 \Big) + \frac{\lambda}{2} \| \nabla (\psi - \psi_\infty) \|^2.
\end{align*}
Here and below, we abbreviate $\| \cdot \| \coleq \| \cdot \|_{L^2(\Omega)}$ and denote by $L(\Omega) > 0$ a constant such that Poincar\'e's estimate $\| f \|^2 \leq \LL \| \nabla f \|^2$ holds true for all $f \in H^1(\Omega)$ subject to $\ol f = 0$.
The estimate in the last line is then a result of $\ol{\psi - \psi_\infty} = 0$ and Poincar\'e's inequality together with the choice $\delta \coleq \lambda/\LL$, whereas the previous bound follows from H\"older's inequality and Young's inequality with some constant $\delta > 0$. We thus find
\begin{multline*}
\frac{\lambda}{2} \int_{\Omega} \left| \nabla (\psi-\psi_\infty) \right|^2 dx \\ 
\leq \frac{3\LL \max \{ M_\infty, 4\}}{2\lambda} \int_\Omega \bigg( \frac{(n-n_\infty)^2}{n_\infty} + \frac{(p-p_\infty)^2}{p_\infty} + \varepsilon \big(\sqrt{n_{tr}} - \sqrt{n_{tr,\infty}} \big)^2 \bigg) \, dx,
\end{multline*}
where we employed the bounds from \eqref{eqeqbounds} and $\big(\sqrt{n_{tr}} + \sqrt{n_{tr,\infty}} \big)^2 \leq 4$.

The last term within the relative entropy including $n_{tr}$ can be controlled as in \cite{FK20}. For convenience, we briefly recall the main arguments. First, there exists for all $x \in \Omega$ some mean value 
\[
\theta(x) \in (\min\{n_{tr}(x), n_{tr,\infty}(x)\}, \max\{n_{tr}(x), n_{tr,\infty}(x)\})
\]
such that
\begin{equation}
\label{eqlnmeanvalue}
\int_{n_{tr,\infty}(x)}^{n_{tr}(x)} \ln \frac{s}{1-s} ds = (n_{tr}(x) - n_{tr,\infty}(x)) \ln \frac{\theta(x)}{1-\theta(x)}.
\end{equation}
To enhance readability, we shall suppress the $x$-dependence of $n_{tr}$ and $n_{tr,\infty}$ subsequently. We further use the bound $n_{tr,\infty} \in (\mu_\infty, 1-\mu_\infty)$ from \eqref{eqeqbounds} and observe that
\[
\left| \int_{n_{tr,\infty}}^{n_{tr}} \ln \frac{s}{1-s} ds \right| \leq \int_{0}^1 \left| \ln \frac{s}{1-s} \right| ds = 2\ln 2
\]
for all $x \in \Omega$. In combination with \eqref{eqlnmeanvalue}, this estimate entails
\[
\left| \ln \frac{\theta(x)}{1-\theta(x)} \right| \big| n_{tr} - n_{tr,\infty} \big| \leq 2 \ln 2.
\]
By an elementary argumentation, one can now conclude that $\theta(x) \in (\xi, 1 - \xi)$ where $\xi \in \big( 0,\frac12 \big)$ only depends on $\mu_\infty$. Therefore, we obtain
\begin{align*}
&\varepsilon \int_\Omega \int_{n_{tr,\infty}}^{n_{tr}} \left( \ln \frac{s}{1-s} - \ln \frac{n_{tr,\infty}}{1-n_{tr,\infty}} \right) ds \, dx \\
&\qquad = \varepsilon \int_\Omega \left( \ln \frac{\theta(x)}{1-\theta(x)} - \ln \frac{n_{tr,\infty}}{1-n_{tr,\infty}} \right) (n_{tr} - n_{tr,\infty}) \, dx \\
&\qquad = \varepsilon \int_\Omega \frac{1}{\sigma(x)(1-\sigma(x))} (\theta(x) - n_{tr,\infty}) (n_{tr} - n_{tr,\infty}) \, dx
\end{align*}
with some $\sigma(x) \in (\min\{\theta(x), n_{tr,\infty}(x)\}, \max\{\theta(x), n_{tr,\infty}(x)\}) \subset [\xi, 1-\xi]$ employing the mean-value theorem and taking into account that
\[
\frac{d}{ds} \ln \frac{s}{1-s} = \frac{1}{s(1-s)}.
\]
As $(\sigma(x)(1-\sigma(x)))^{-1}$ is uniformly bounded in $\Omega$ in terms of $\xi(\mu_\infty)$, there exists some $c > 0$ only depending on $\mu_\infty$ such that 
\begin{align*}
&\varepsilon \int_\Omega \int_{n_{tr,\infty}}^{n_{tr}} \left( \ln \frac{s}{1-s} - \ln \frac{n_{tr,\infty}}{1-n_{tr,\infty}} \right) ds \, dx \\
&\quad \leq c \varepsilon \int_\Omega |\theta(x) - n_{tr,\infty}| |n_{tr} - n_{tr,\infty}| \, dx \\
&\quad \leq 4 c \varepsilon \int_\Omega \big(\sqrt{n_{tr}} - \sqrt{n_{tr,\infty}} \big)^2 \, dx
\end{align*}
where the last line results from estimating $\big(\sqrt{n_{tr}} + \sqrt{n_{tr,\infty}} \big)^2 \leq 4$. This proves the claim.
\end{proof}

The subsequent lemma contains rather non-intuitive estimates for bilinear terms like $(n - n_\infty)(p - p_\infty)$. These expressions will appear in the proof of Proposition \ref{propgd} below. Admissible functions are typically assumed to belong to the set 
\begin{align}
\label{eqsetn}
\mathcal N \coleq \big\{ (n, p, n_{tr}) \in L^2_+(\Omega)^3 \; : \; n, p \leq M, \ n_{tr} \leq 1 \text{\ a.e.\ in\ } \Omega \big\}.
\end{align}

\begin{lemma}
\label{lemmareactionterms}
The following estimates hold true for all $(n, p, n_{tr}) \in \mathcal N$ with explicit constants $\Gamma_1(M
) > 0$ and $\Gamma_2
> 0$:
\begin{align*}
(n - n_\infty) (p - p_\infty) &\leq \Gamma_1 \bigg(\!- R_n \ln \frac{n (1 - n_{tr})}{n_0 \mu_n n_{tr}} - R_p \ln \frac{p n_{tr}}{p_0 \mu_p (1 - n_{tr})} \, \bigg), \\
(n - n_\infty) (-n_{tr} + n_{tr,\infty}) &\leq \Gamma_2 \bigg(\!- R_n \ln \frac{n (1 - n_{tr})}{n_0 \mu_n n_{tr}} + \big( \sqrt{n_{tr}} - \sqrt{n_{tr,\infty}} \big)^2 \, \bigg), \\
(p - p_\infty) (n_{tr} - n_{tr,\infty}) \\
&\mqquad\mqquad\mquad \leq \Gamma_2 \bigg(\!- R_p \ln \frac{p n_{tr}}{p_0 \mu_p (1 - n_{tr})} + \big( \sqrt{1 - n_{tr}} - \sqrt{1 - n_{tr,\infty}} \big)^2 \, \bigg).
\end{align*}
\end{lemma}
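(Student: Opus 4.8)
The plan is to treat the three estimates in turn, reducing each to a pointwise inequality between a product of deviations and a combination of the corresponding entropy-production terms. Throughout I will use the representations of the equilibrium states from \eqref{eqeqstates} and \eqref{eqntrrelations}, as well as the uniform bounds \eqref{eqeqbounds}. The guiding idea, following \cite{GG96,FK18,FK20}, is that each reaction term $-R_n \ln\frac{n(1-n_{tr})}{n_0\mu_n n_{tr}}$ is of the form $(a-1)\ln a$ up to a positive prefactor, and such expressions control the squared differences of the (square roots of the) involved densities via the elementary inequality $(a-1)\ln a \geq (\sqrt a - 1)^2$ (valid for $a>0$, indeed $(a-1)\ln a \geq c\,(\sqrt a - 1)^2$ on bounded ranges with an explicit $c$). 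Writing $a_n \coleq \frac{n(1-n_{tr})}{n_0\mu_n n_{tr}}$ and $a_p \coleq \frac{p\,n_{tr}}{p_0\mu_p(1-n_{tr})}$, one has $R_n = \frac{1}{\tau_n}\frac{n(1-n_{tr})}{n_0\mu_n}\cdot\frac{n_0\mu_n n_{tr}}{n(1-n_{tr})}\,(a_n^{-1}-1)\cdot(\dots)$; more cleanly, $-R_n\ln a_n = \frac{1}{\tau_n}\frac{n}{n_0\mu_n}(1-n_{tr})(a_n-1)\ln a_n \geq 0$, and since the prefactor $\frac{1}{\tau_n}\frac{n}{n_0\mu_n}(1-n_{tr})$ is bounded below by an explicit positive constant away from $t=0$ is \emph{not} available here (we only have $n,p\leq M$, no lower bound), I instead keep the prefactor and exploit that $-R_n\ln a_n$ directly dominates a quantity measuring how far $a_n$ is from $1$, hence how far $n(1-n_{tr})$ is from $n_0\mu_n n_{tr}$, which at equilibrium equals $n_\infty(1-n_{tr,\infty})=n_0\mu_n n_{tr,\infty}$.

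For the first estimate I would expand $(n-n_\infty)(p-p_\infty)$ and, using Young's inequality, bound it by $C\big((n-n_\infty)^2 + (p-p_\infty)^2\big)$ for an explicit $C$ depending on $M$ (this already costs nothing since both factors are bounded). Then it suffices to show each squared deviation $(n-n_\infty)^2$ is controlled by the \emph{sum} $-R_n\ln a_n - R_p\ln a_p$. The mechanism: $a_n$ close to $1$ forces $n(1-n_{tr})$ close to $n_0\mu_n n_{tr}$; combined with $a_p$ close to $1$, i.e. $p\,n_{tr}$ close to $p_0\mu_p(1-n_{tr})$, and the algebraic relation $n_\ast p_\ast = n_0 p_0$ (equivalently the product structure of the equilibrium), one recovers closeness of $n$ to $n_\infty$ and $p$ to $p_\infty$. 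Concretely, I expect to argue: $-R_n\ln a_n \geq c_1(M)\big(\tfrac{n(1-n_{tr})}{n_0\mu_n} - n_{tr}\big)^2$ and similarly for $R_p$, using $(a-1)\ln a \geq c\,(x-y)^2$ where $a=x/y$ and $x,y$ range over a bounded set bounded above (a bounded range from above suffices for this direction of the inequality, since $(a-1)\ln a\geq 0$ always and grows). Then a linear-algebra step converts $\{$closeness of $\tfrac{n(1-n_{tr})}{n_0\mu_n}$ to $n_{tr}$, closeness of $\tfrac{p\,n_{tr}}{p_0\mu_p}$ to $1-n_{tr}\}$ into closeness of $(n,p,n_{tr})$ to $(n_\infty,p_\infty,n_{tr,\infty})$; here one uses that the map is a local diffeomorphism near equilibrium with derivative bounded away from singular, uniformly in $\varepsilon$, thanks to \eqref{eqeqbounds}.

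For the second and third estimates the structure is the same but now only one reaction term is available on the right, compensated by the extra $(\sqrt{n_{tr}}-\sqrt{n_{tr,\infty}})^2$ term (resp. with $1-n_{tr}$). Here I expand $(n-n_\infty)(-n_{tr}+n_{tr,\infty}) \leq \Gamma(n-n_\infty)^2 + \Gamma(n_{tr}-n_{tr,\infty})^2$ by Young, then note $(n_{tr}-n_{tr,\infty})^2 \leq 4(\sqrt{n_{tr}}-\sqrt{n_{tr,\infty}})^2$ disposes of the second piece directly, and $(n-n_\infty)^2$ must be controlled by $-R_n\ln a_n$ \emph{together with} $(\sqrt{n_{tr}}-\sqrt{n_{tr,\infty}})^2$: from $-R_n\ln a_n$ one gets $\tfrac{n(1-n_{tr})}{n_0\mu_n}$ close to $n_{tr}$; writing $n = \frac{n_0\mu_n\, n_{tr}}{1-n_{tr}}\cdot a_n$ and using that $n_{tr}$ is close to $n_{tr,\infty}$ (controlled by the $\sqrt{\,}$-term) together with $n_\infty = \frac{n_0\mu_n n_{tr,\infty}}{1-n_{tr,\infty}}$ from \eqref{eqntrrelations}, one concludes. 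The third estimate is obtained by the symmetric computation with the roles of occupied/unoccupied trap states exchanged, i.e. replacing $n_{tr}$ by $1-n_{tr}$ and $R_n$ by $R_p$.

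\textbf{Main obstacle.} The delicate point is producing the \emph{explicit, $\varepsilon$-independent} constant in the step $(a-1)\ln a \geq c(x-y)^2$ when $a=x/y$, because this requires a lower bound on $y$ (and an upper bound on $x/y$) — but $n_{tr}$ and $1-n_{tr}$ are \emph{not} bounded below by a positive constant for the admissible functions in $\mathcal N$ (only $n_{tr}\in[0,1]$). The resolution is to observe that when $n_{tr}$ (or $1-n_{tr}$) is small, the corresponding deviation $(n-n_\infty)$ or $(n_{tr}-n_{tr,\infty})$ is automatically bounded away from the bad regime by the equilibrium bounds $n_{tr,\infty}\in(\mu_\infty,1-\mu_\infty)$, so one splits into cases: either $n_{tr}$ is comparable to $n_{tr,\infty}$ (and the local-diffeomorphism argument applies with uniform constants), or $n_{tr}$ is far from $n_{tr,\infty}$ (and then $(\sqrt{n_{tr}}-\sqrt{n_{tr,\infty}})^2$ is bounded below by a positive constant, making the inequality trivial after bounding the left side by $M^2$ or $1$). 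Carrying out this case distinction cleanly, with all constants tracked in terms of $M$ and $\mu_\infty$ (hence $K_\infty$), is the technical heart of the lemma; the rest is Young's inequality and elementary algebra.
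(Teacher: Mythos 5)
Your treatment of the first estimate contains a genuine gap that no amount of case analysis will repair. You propose to apply Young's inequality, $(n-n_\infty)(p-p_\infty)\le C\big((n-n_\infty)^2+(p-p_\infty)^2\big)$, and then to control each individual squared deviation by the sum $-R_n\ln\frac{n(1-n_{tr})}{n_0\mu_n n_{tr}}-R_p\ln\frac{p\,n_{tr}}{p_0\mu_p(1-n_{tr})}$. This last step is false: the two reaction terms vanish identically on the one-parameter family $n=n_0\mu_n\frac{n_{tr}}{1-n_{tr}}$, $p=p_0\mu_p\frac{1-n_{tr}}{n_{tr}}$ with $n_{tr}\in(0,1)$ arbitrary (pointwise), and along this family $n\neq n_\infty$, $p\neq p_\infty$ unless $n_{tr}=n_{tr,\infty}$. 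So no constant can bound $(n-n_\infty)^2+(p-p_\infty)^2$ by the right-hand side. Your "local diffeomorphism" argument cannot rescue this: you have two scalar constraints ($R_n$ small, $R_p$ small) for three unknowns $(n,p,n_{tr})$, so their joint zero set is a curve through the equilibrium, not the single point $(n_\infty,p_\infty,n_{tr,\infty})$. What saves the lemma is precisely the sign structure you discard with Young's inequality: on that curve one has $np=n_0p_0\mu_n\mu_p=n_\infty p_\infty$, hence $(n-n_\infty)(p-p_\infty)\le 0$. The paper keeps this structure by invoking $(a-a_0)(b-b_0)\le(\sqrt{ab}-\sqrt{a_0b_0})^2$, which reduces the mixed product to the deviation of the \emph{product} $np$ from $n_\infty p_\infty$; that single deviation is then telescoped through the trap level, $\sqrt{\tfrac{np}{n_0\mu_np_0\mu_p}}-1=\sqrt{\tfrac{n}{n_0\mu_n n_{tr}}}\big(\sqrt{\tfrac{p n_{tr}}{p_0\mu_p}}-\sqrt{1-n_{tr}}\big)+\tfrac{1}{\sqrt{n_{tr}}}\big(\sqrt{\tfrac{n(1-n_{tr})}{n_0\mu_n}}-\sqrt{n_{tr}}\big)$, and estimated term by term via $4(\sqrt x-\sqrt y)^2\le(x-y)\ln\tfrac xy$; the case split $n_{tr}\gtrless\tfrac12$ is there only to keep the prefactors $\tfrac{1}{\sqrt{n_{tr}}}$ (resp. $\tfrac{1}{\sqrt{1-n_{tr}}}$) bounded.

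Your route for the second and third estimates (Young's inequality, then controlling $(n-n_\infty)^2$ by $-R_n\ln(\cdot)$ plus $(\sqrt{n_{tr}}-\sqrt{n_{tr,\infty}})^2$ with a dichotomy on whether $n_{tr}$ is near $n_{tr,\infty}$) can be made to work, but it is more laborious than necessary and yields a $\Gamma_2$ depending on $M$, whereas the paper obtains these two bounds with no case distinction at all: one writes $(n-n_\infty)(-n_{tr}+n_{tr,\infty})\le\big(\sqrt{n(1-n_{tr})}-\sqrt{n_\infty(1-n_{tr,\infty})}\big)^2=n_0\mu_n\big(\sqrt{\tfrac{n(1-n_{tr})}{n_0\mu_n}}-\sqrt{n_{tr,\infty}}\big)^2$ using \eqref{eqntrrelations}, inserts $\pm\sqrt{n_{tr}}$, and applies the same two elementary inequalities. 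I recommend you restructure the whole proof around $(a-a_0)(b-b_0)\le(\sqrt{ab}-\sqrt{a_0b_0})^2$ rather than Young's inequality.
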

\begin{proof}
As in \cite{GG96} and \cite{FK18}, we first recall the elementary inequalities $(a - a_0)(b - b_0) \leq (\sqrt{ab} - \sqrt{a_0 b_0})^2$ for all $a,a_0,b,b_0 \geq 0$ 
and $4 (\sqrt x - \sqrt y)^2 \leq (x - y) \ln \frac{x}{y}$ for all $x \geq 0$ and $y > 0$.

Concerning the first inequality we write 
\[
(n - n_\infty)(p - p_\infty) \leq \big( \sqrt{np} - \sqrt{n_\infty p_\infty} \big)^2 = n_\infty p_\infty \bigg(\sqrt{\frac{np}{n_0 \mu_n p_0 \mu_p}} - 1 \bigg)^2
\]
and distinguish the two cases $n_{tr} > \frac12$ and $n_{tr} \leq \frac12$. In the case $n_{tr} > \frac12$, we infer
\begin{align*}
(n - n_\infty) (p - p_\infty) &\leq n_0 p_0 \mu_n \mu_p \bigg( \sqrt{ \frac{n}{n_0 \mu_n n_{tr}} } \bigg( \sqrt{ \frac{p}{p_0 \mu_p} n_{tr} } - \sqrt{1 - n_{tr}} \bigg) \\
&\quad + \sqrt{ \frac{1}{n_{tr}} } \bigg( \sqrt{ \frac{n}{n_0 \mu_n} (1 - n_{tr}) } - \sqrt{n_{tr}} \bigg) \bigg)^2 \\
&\leq \Gamma_1(M
) \bigg( \Big( \frac{n}{n_0 \mu_n} \, (1 - n_{tr}) - n_{tr} \Big) \ln \frac{n (1 - n_{tr})}{n_0 \mu_n n_{tr}} \\
&\quad + \Big( \frac{p}{p_0 \mu_p} \, n_{tr} - (1 - n_{tr}) \Big) \ln \frac{p n_{tr}}{p_0 \mu_p (1-n_{tr})} \bigg)
\end{align*}
employing the $L^\infty$ bound on $n$. Using analogous arguments we derive the same result also in the case $n_{tr} \leq \frac12$. The second inequality arises from
\begin{align*}
(n - n_\infty)(-n_{tr} + n_{tr,\infty}) &\leq \Big( \sqrt{n(1 - n_{tr})} - \sqrt{n_\infty (1 - n_{tr,\infty})} \Big)^2 \\
&= n_0 \mu_n \bigg( \sqrt{ \frac{n}{n_0 \mu_n} (1 - n_{tr}) } - \sqrt{n_{tr,\infty}} \bigg)^2
\end{align*}
where we used the relation $n_\infty (1 - n_{tr,\infty}) = n_0 \mu_n n_{tr,\infty}$ from Proposition \ref{propequilibrium}. The claim is now a consequence of
\begin{multline*}
n_0 \mu_n \bigg( \bigg( \sqrt{ \frac{n}{n_0 \mu_n} (1 - n_{tr}) } - \sqrt{n_{tr}} \bigg) + \big( \sqrt{n_{tr}} - \sqrt{n_{tr,\infty}} \big) \bigg)^2 \\
\leq \Gamma_2
\bigg( \Big( \frac{n (1 - n_{tr})}{n_0 \mu_n} - n_{tr} \Big) \ln \frac{n (1 - n_{tr})}{n_0 \mu_n n_{tr}} + \big( \sqrt{n_{tr}} - \sqrt{n_{tr,\infty}} \big)^2 \bigg).
\end{multline*}
Similarly, one can also verify the third inequality stated above.
\end{proof}

The next result establishes an upper bound for the $L^2$ distance between $(n,p)$ and $(n_\infty,p_\infty)$ basically in terms of the entropy production $P$ and $\| \sqrt{n_{tr}} - \sqrt{n_{tr,\infty}} \|_{L^2(\Omega)}^2$. Similar arguments already appeared in \cite{GG96} and \cite{FK18}.

\begin{proposition}
\label{propgd}
There exists an explicit constant $c_2(M, K_\infty
) > 0$ satisfying
\begin{multline*}
\int_\Omega \bigg( \frac{(n-n_\infty)^2}{n_\infty} + \frac{(p-p_\infty)^2}{p_\infty} \bigg) \, dx \leq c_2 P(n, p, n_{tr}, \psi) \\
+ c_2 \, \varepsilon \int_\Omega \bigg( \big( \sqrt{n_{tr}} - \sqrt{n_{tr,\infty}} \big)^2 + \big( \sqrt{1 - n_{tr}} - \sqrt{1 - n_{tr,\infty}} \big)^2 \bigg) dx 
\end{multline*}
for all $\varepsilon > 0$ and all $(n, p, n_{tr}) \in \mathcal N$ where additionally $n, p \in H^1(\Omega)$ and $\psi \in H^1(\Omega)$ is the unique solution of \eqref{eqpsi} with $f = n - p + \varepsilon n_{tr} - D$ and $\ol \psi = 0$.
\end{proposition}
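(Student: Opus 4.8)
The plan is to adapt the strategy of \cite{GG96} and \cite{FK18}, exploiting that the drift--diffusion fluxes become pure diffusion fluxes after the substitution $N \coleq n \, e^{\psi + V_n}$ and $P' \coleq p \, e^{-\psi + V_p}$. A direct computation gives $\frac{|J_n|^2}{n} = 4 \, e^{-\psi - V_n} \, |\nabla \sqrt N|^2$ and $\frac{|J_p|^2}{p} = 4 \, e^{\psi - V_p} \, |\nabla \sqrt{P'}|^2$, while the equilibrium values $N_\infty = n_\ast$ and $P'_\infty = p_\ast$ are \emph{spatial constants} by \eqref{eqeqstates}. Since $f = n - p + \varepsilon n_{tr} - D$ is bounded in $L^\infty(\Omega)$ by a constant depending only on $M$ and the fixed model data, elliptic regularity yields $\psi \in W^{2,q}(\Omega)$ for all $q < \infty$, hence $\| \psi \|_{C(\ol\Omega)} \leq C(M)$, so $e^{-\psi - V_n}$ and $e^{\psi - V_p}$ are bounded below by a positive constant. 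Consequently $\| \nabla \sqrt N \|_{L^2(\Omega)}^2 + \| \nabla \sqrt{P'} \|_{L^2(\Omega)}^2 \leq C \, P$, and Poincaré's inequality shows that $\sqrt N$ and $\sqrt{P'}$ differ from their spatial means $\beta_n \coleq \ol{\sqrt N}$ and $\beta_p \coleq \ol{\sqrt{P'}}$ by at most $C \, P$ in $L^2(\Omega)$; the same then holds for $N$ and $P'$ relative to $\beta_n^2$ and $\beta_p^2$, using the uniform $L^\infty$ bounds.

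Next I would reduce the left hand side to the \emph{constant deviations} $(\beta_n^2 - n_\ast)^2$, $(\beta_p^2 - p_\ast)^2$, the quantity $\| \psi - \psi_\infty \|_{L^2(\Omega)}^2$, and $P$. Writing $n - n_\infty = (N - n_\ast) \, e^{-\psi - V_n} + n_\ast \, e^{-V_n} \big( e^{-\psi} - e^{-\psi_\infty} \big)$, applying the mean value theorem to the last bracket, and using $n_\infty \geq \mu_\infty$ together with the $L^\infty$ bounds, one obtains $\frac{(n - n_\infty)^2}{n_\infty} \leq C (N - n_\ast)^2 + C (\psi - \psi_\infty)^2$ and hence $\| N - n_\ast \|_{L^2(\Omega)}^2 \leq C \, P + C (\beta_n^2 - n_\ast)^2$, and analogously for $p$. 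To estimate $\| \psi - \psi_\infty \|_{L^2(\Omega)}^2$ without circularity, I would rewrite Poisson's equation for $\phi \coleq \psi - \psi_\infty$ as
\[
- \lambda \, \Delta \phi + \big( a_n(x) + a_p(x) \big) \phi = g,
\]
where $a_n, a_p$ arise from the mean value theorem applied to the strictly monotone maps $\psi \mapsto n_\ast e^{-\psi - V_n}$ and $\psi \mapsto p_\ast e^{\psi - V_p}$ (so $a_n, a_p \geq c_0 > 0$ uniformly in $\varepsilon$) and $g \coleq (N - n_\ast) e^{-\psi - V_n} - (P' - p_\ast) e^{\psi - V_p} + \varepsilon (n_{tr} - n_{tr,\infty})$. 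Testing with $\phi$ and absorbing the zeroth order term yields $\| \phi \|_{L^2(\Omega)}^2 \leq C \| g \|_{L^2(\Omega)}^2 \leq C \, P + C (\beta_n^2 - n_\ast)^2 + C (\beta_p^2 - p_\ast)^2 + C \varepsilon \int_\Omega \big( \sqrt{n_{tr}} - \sqrt{n_{tr,\infty}} \big)^2 \, dx$, where I use $\varepsilon^2 (n_{tr} - n_{tr,\infty})^2 \leq 4 \varepsilon_0 \, \varepsilon \big( \sqrt{n_{tr}} - \sqrt{n_{tr,\infty}} \big)^2$. Collecting these estimates reduces the assertion to a bound of the form $(\beta_n^2 - n_\ast)^2 + (\beta_p^2 - p_\ast)^2 \leq C \, P + C \varepsilon \int_\Omega \big( (\sqrt{n_{tr}} - \sqrt{n_{tr,\infty}})^2 + (\sqrt{1 - n_{tr}} - \sqrt{1 - n_{tr,\infty}})^2 \big) \, dx$.

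Controlling these constant deviations is the main obstacle. The first ingredient is a product relation: with $x_n \coleq \frac{n(1 - n_{tr})}{n_0 \mu_n n_{tr}}$ and $x_p \coleq \frac{p \, n_{tr}}{p_0 \mu_p (1 - n_{tr})}$ one has $R_n = \frac{n_{tr}}{\tau_n}(1 - x_n)$ and $R_p = \frac{1 - n_{tr}}{\tau_p}(1 - x_p)$, so the integrand $- R_n \ln x_n - R_p \ln x_p$ (whose integral is at most $P$) dominates $\frac{4 n_{tr}}{\tau_n}(\sqrt{x_n} - 1)^2 + \frac{4(1 - n_{tr})}{\tau_p}(\sqrt{x_p} - 1)^2$ pointwise by $4(\sqrt a - 1)^2 \leq (a - 1) \ln a$; since $x_n x_p = \frac{N P'}{n_0 p_0}$ and since small reaction terms force $n_{tr}$ to stay away from $0$ and $1$ (otherwise one of $|R_n|, |R_p|$ is bounded below, using $n, p \leq M$), a short case distinction upgrades this to $\big( \sqrt{N P' / (n_0 p_0)} - 1 \big)^2 \leq C \big( - R_n \ln x_n - R_p \ln x_p \big)$, whence $\| N P' - n_0 p_0 \|_{L^1(\Omega)} \leq C \, P$ and, by the $L^2$-closeness of $N, P'$ to $\beta_n^2, \beta_p^2$, $|\beta_n^2 \beta_p^2 - n_0 p_0| \leq C \, P$. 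The second ingredient is the charge conservation law $\ol n - \ol p + \varepsilon \ol{n_{tr}} = \ol D = \ol{n_\infty} - \ol{p_\infty} + \varepsilon \ol{n_{tr,\infty}}$, which — combined with $\ol n = \ol{N e^{-\psi - V_n}}$, $\ol p = \ol{P' e^{\psi - V_p}}$, the closeness of $N, P'$ to $\beta_n^2, \beta_p^2$, the closeness of $\psi$ to $\psi_\infty$ just established, $n_\ast p_\ast = n_0 p_0$, and the lower bounds $n_\ast, p_\ast \geq \mu_\infty$ — forces, on setting $s \coleq \beta_n^2 / n_\ast$, a relation $(s - 1)\big( \ol{n_\infty} + \ol{p_\infty}/s \big) \approx - \varepsilon \big( \ol{n_{tr}} - \ol{n_{tr,\infty}} \big)$ whose coefficient is bounded below, so that $|s - 1|^2$, and hence $(\beta_n^2 - n_\ast)^2$ and symmetrically $(\beta_p^2 - p_\ast)^2$, is bounded by $C \, P + C \varepsilon \int_\Omega ( \cdots ) \, dx$. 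The delicate point is that the various sub-estimates above are mutually coupled through $\| \psi - \psi_\infty \|_{L^2(\Omega)}$ and through the constant deviations; closing this system requires keeping explicit track of all constants (in terms of $M$, $K_\infty$, $M_\infty$, $\mu_\infty$, $n_\ast$, $p_\ast$, $\lambda$, the Poincaré constant, and the elliptic/Sobolev constants) and arranging the estimates in the right order so that each circular contribution enters with a coefficient small enough to be reabsorbed. Assembling all pieces and recalling $|\Omega| = 1$ then gives the claimed inequality with an explicit $c_2$ depending only on $M$ and $K_\infty$.
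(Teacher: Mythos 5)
Your overall architecture (Slotboom variables, Poincar\'e for the fluxes, then conservation law plus the product relation $x_n x_p = NP'/(n_0p_0)$ to pin down the constant deviations $\beta_n^2-n_\ast$, $\beta_p^2-p_\ast$) is a legitimate classical route, and several individual steps are fine: the identity $|J_n|^2/n = 4e^{-\psi-V_n}|\nabla\sqrt N|^2$, the semilinear elliptic estimate for $\phi=\psi-\psi_\infty$ with the monotone zeroth-order term, and the pointwise bound $(\sqrt{x_nx_p}-1)^2\le C(M)(-R_n\ln x_n - R_p\ln x_p)$ all check out (the latter by a $n_{tr}\gtrless\tfrac12$ case distinction using $n,p\le M$; your heuristic about $n_{tr}$ being forced away from $0$ and $1$ is not actually needed). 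But the proof does not close where you yourself flag the difficulty. Your estimates form the circular system
\begin{equation*}
(\beta_n^2-n_\ast)^2+(\beta_p^2-p_\ast)^2 \;\le\; A + C_1\,\|\phi\|_{L^2}^2,
\qquad
\|\phi\|_{L^2}^2 \;\le\; A' + C_2\big[(\beta_n^2-n_\ast)^2+(\beta_p^2-p_\ast)^2\big],
\end{equation*}
with $A,A'$ of the admissible form $CP+C\varepsilon\int(\cdots)$, and closing it requires $C_1C_2<1$. There is no mechanism making this product small: $C_1$ carries $\ol{n_\infty}^{-2}\le\mu_\infty^{-2}$ and the Lipschitz constants of $\psi\mapsto e^{\pm\psi}$ on $[-K,K]$, while $C_2$ carries the inverse of the coercivity constant $c_0$ of $a_n+a_p$ (itself of order $\mu_\infty$) squared, so the product is generically large. ``Arranging the estimates in the right order'' is precisely the step that needs an idea, and as written the argument is incomplete at its crux. (Two smaller points: the Poincar\'e step gives $\|\sqrt N-\beta_n\|_{L^2}\le C\sqrt P$, not $CP$ --- harmless since everything is eventually squared; and your absorption $\varepsilon^2(n_{tr}-n_{tr,\infty})^2\le 4\varepsilon_0\,\varepsilon(\sqrt{n_{tr}}-\sqrt{n_{tr,\infty}})^2$ uses $\varepsilon\le\varepsilon_0$, whereas the proposition is claimed for all $\varepsilon>0$.)

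The paper avoids this entire bookkeeping problem with a short direct argument that you should compare against. Setting $N:=n_\ast e^{-\psi-V_n}$ and $P:=p_\ast e^{\psi-V_p}$ (built from the \emph{equilibrium} constants but the \emph{current} potential, so that $J_n(N,\psi)=J_p(P,\psi)=0$ and $n_\infty/N=e^{\psi-\psi_\infty}$), one writes $J_n=N\nabla(n/N)$ and uses $|a+b|^2\ge 2\,a\cdot b$ to obtain the pointwise bound
\begin{equation*}
\frac{|J_n|^2}{n_\infty n}\;\ge\;2\,\nabla(\psi-\psi_\infty)\cdot\nabla\Big(\frac{n-n_\infty}{n_\infty}\Big),
\end{equation*}
and likewise for $p$ with the opposite sign. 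Integrating, integrating by parts, and substituting Poisson's equation for $-\lambda\Delta(\psi-\psi_\infty)$ produces \emph{exactly} the quadratic form $\int\big(\tfrac{(n-n_\infty)^2}{n_\infty}+\tfrac{(p-p_\infty)^2}{p_\infty}\big)dx$ plus bilinear cross terms $(n-n_\infty)(p-p_\infty)$ and $\varepsilon(n_{tr}-n_{tr,\infty})\cdot(\cdots)$, which Lemma \ref{lemmareactionterms} bounds by the reaction part of $P$ plus $\varepsilon$ times the square-root differences. No Poincar\'e inequality, no spatial averages, no conservation law, and no absorption of large constants is needed; all constants enter multiplicatively and explicitly. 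Unless you can exhibit a concrete reordering that makes your circular coefficients strictly less than one, you should switch to this decomposition.
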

\begin{proof}
We start by defining intermediate equilibria $N \coleq n_\ast e^{-\psi - V_n}$ and $P \coleq p_\ast e^{\psi - V_p}$ which fulfil $J_n(N, \psi) = J_p(P, \psi) = 0$. Due to $J_n(n, \psi) = N \nabla (\frac{n}{N})$ and $J_p(p, \psi) = P \nabla (\frac{p}{P})$, we derive the following lower bounds for the flux terms involving $J_n$ and $J_p$:
\begin{align*}
\frac{|J_n|^2}{n_\infty n} &= \frac{N^2}{n_\infty n} \bigg| \nabla \Big( \frac{n}{N} \Big) \bigg|^2 = \frac{N^2}{n_\infty n} \bigg| \frac{n_\infty}{N} \nabla \Big( \frac{n}{n_\infty} \Big) + \frac{n}{n_\infty} \nabla \Big( \frac{n_\infty}{N} \Big) \bigg|^2 \\
&= \frac{N^2}{n_\infty n} \bigg| e^{\psi - \psi_\infty} \nabla \Big( \frac{n}{n_\infty} \Big) + \frac{n}{n_\infty} e^{\psi - \psi_\infty} \nabla (\psi - \psi_\infty) \bigg|^2 \\
&\geq 2 \frac{N^2}{n_\infty^2} e^{2(\psi - \psi_\infty)} \nabla \Big( \frac{n}{n_\infty} \Big) \cdot \nabla (\psi - \psi_\infty) = 2 \nabla (\psi - \psi_\infty) \cdot \nabla \Big( \frac{n - n_\infty}{n_\infty} \Big).
\end{align*}
In the same way, we obtain $\frac{|J_p|^2}{p_\infty p} \geq -2 \nabla (\psi - \psi_\infty) \cdot \nabla ( \frac{p - p_\infty}{p_\infty} )$ and, therefore, 
\begin{multline*}
\frac{\lambda}{2} \int_{\Omega} \bigg( \frac{|J_n|^2}{n_\infty n} + \frac{|J_p|^2}{p_\infty p} \bigg) dx \geq \lambda \int_\Omega \nabla (\psi - \psi_\infty) \cdot \nabla \bigg( \frac{n - n_\infty}{n_\infty} - \frac{p - p_\infty}{p_\infty} \bigg) dx \\
= \int_\Omega \Big( (n - n_\infty) - (p - p_\infty) + \varepsilon (n_{tr} - n_{tr,\infty}) \Big) \bigg( \frac{n - n_\infty}{n_\infty} - \frac{p - p_\infty}{p_\infty} \bigg) dx
\end{multline*}
via integration by parts and Poisson's equation. Rearranging this inequality now yields
\begin{align*}
&\int_\Omega \bigg( \frac{(n-n_\infty)^2}{n_\infty} + \frac{(p-p_\infty)^2}{p_\infty} \bigg) dx \\
&\quad \leq \frac{\lambda}{2} \int_{\Omega} \bigg( \frac{|J_n|^2}{n_\infty n} + \frac{|J_p|^2}{p_\infty p} \bigg) dx + \int_{\Omega} \bigg( \Big( \frac{1}{n_\infty} + \frac{1}{p_\infty} \Big) (n - n_\infty)(p - p_\infty) \\
&\qquad + \varepsilon (-n_{tr} + n_{tr,\infty}) \frac{n - n_\infty}{n_\infty} + \varepsilon (n_{tr} - n_{tr,\infty}) \frac{p - p_\infty}{p_\infty} \bigg) dx.
\end{align*}
Together with the bounds from \eqref{eqeqbounds} and Lemma \ref{lemmareactionterms}, we arrive at the desired result.
\end{proof}

We are now in a position to prove the EEP inequality from Theorem \ref{theoremeepinequality}, where the main task is to provide an appropriate bound on $(\sqrt{n_{tr}} - \sqrt{n_{tr,\infty}})^2$.

\begin{proof}[Proof of Theorem \ref{theoremeepinequality}] 
\noindent
\textbf{Step 1.} Due to \eqref{eqntrrelations} we easily calculate 
\begin{align*}
\sqrt{n_{tr}} &- \sqrt{\frac{n}{n_0 \mu_n} (1 - n_{tr})} = \sqrt{n_{tr}} - \sqrt{n_{tr,\infty}} - \sqrt{\frac{n}{n_0 \mu_n} (1 - n_{tr})} \\
&\qquad + \sqrt{\frac{n_\infty}{n_0 \mu_n} (1 - n_{tr})} - \sqrt{\frac{n_\infty}{n_0 \mu_n} (1 - n_{tr})} + \sqrt{\frac{n_\infty}{n_0 \mu_n} (1 - n_{tr,\infty})} \\
&= \sqrt{n_{tr}} - \sqrt{n_{tr,\infty}} - \sqrt{\frac{1 - n_{tr}}{n_0 \mu_n}} \Big( \sqrt{n} - \sqrt{n_\infty} \Big) \\
&\qquad + \sqrt{\frac{n_\infty}{n_0 \mu_n}} \Big( \sqrt{1 - n_{tr,\infty}} - \sqrt{1 - n_{tr}} \Big).
\end{align*}
Observing that $\sqrt{n_{tr}} - \sqrt{n_{tr,\infty}}$ and $\sqrt{1 - n_{tr,\infty}} - \sqrt{1 - n_{tr}}$ have the same sign and using the inequality $4 (\sqrt x - \sqrt y)^2 \leq (x - y) \ln \frac{x}{y}$ for $x \geq 0$ and $y > 0$, we reformulate the previous identity to find
\begin{align*}
\big( \sqrt{n_{tr}} - \sqrt{n_{tr,\infty}} \big)^2 &\leq \bigg( \sqrt{n_{tr}} - \sqrt{n_{tr,\infty}} + \sqrt{\frac{n_\infty}{n_0 \mu_n}} \Big( \sqrt{1 - n_{tr,\infty}} - \sqrt{1 - n_{tr}} \Big) \bigg)^2 \\
&\leq 2 \bigg( \sqrt{n_{tr}} - \sqrt{\frac{n}{n_0 \mu_n} (1 - n_{tr})} \bigg)^2 + \frac{2}{n_0 \mu_n} \big( \sqrt{n} - \sqrt{n_\infty} \big)^2 \\
&\leq \frac12 \Big( \frac{n (1 - n_{tr})}{n_0 \mu_n} - n_{tr} \Big) \ln \frac{n (1 - n_{tr})}{n_0 \mu_n n_{tr}} + \frac{2}{n_0 \mu_n} \frac{(n - n_\infty)^2}{n_\infty}.
\end{align*}
Along the same lines, we also deduce 
\begin{multline*}
\big( \sqrt{1 - n_{tr}} - \sqrt{1 - n_{tr,\infty}} \big)^2 \\
\leq \frac12 \Big( \frac{p n_{tr}}{p_0 \mu_p} - (1 - n_{tr}) \Big) \ln \frac{p n_{tr}}{p_0 \mu_p (1 - n_{tr})} + \frac{2}{p_0 \mu_p} \frac{(p - p_\infty)^2}{p_\infty}.
\end{multline*}

\smallskip \noindent
\textbf{Step 2.} We can now improve the claim of Proposition \ref{propeg} in the sense that there exists a constant $c_1(K_\infty
) > 0$ satisfying
\begin{multline*}
E(n, p, n_{tr}, \psi) - E(n_\infty, p_\infty, n_{tr,\infty}, \psi_\infty) \\
\leq c_1 P(n, p, n_{tr}, \psi) + c_1 \int_\Omega \bigg( \frac{(n-n_\infty)^2}{n_\infty} + \frac{(p-p_\infty)^2}{p_\infty} \bigg) \, dx
\end{multline*}
for all $(n, p, n_{tr}) \in \mathcal N$ where $\psi \in H^1(\Omega)$ is the unique solution of \eqref{eqpsi} with $f = n - p + \varepsilon n_{tr} - D$ and $\ol \psi = 0$ for any $\varepsilon > 0$. Furthermore, we notice that Proposition \ref{propgd} now gives rise to a constant $c_2(M, K_\infty
) > 0$ such that
\begin{multline*}
\int_\Omega \bigg( \frac{(n-n_\infty)^2}{n_\infty} + \frac{(p-p_\infty)^2}{p_\infty} \bigg) \, dx \\
\leq c_2 P(n, p, n_{tr}, \psi) + c_2 \, \varepsilon \int_\Omega \bigg( \frac{(n - n_\infty)^2}{n_\infty} + \frac{(p - p_\infty)^2}{p_\infty} \bigg) dx 
\end{multline*}
holds true for all $\varepsilon > 0$ and all $(n, p, n_{tr}) \in \mathcal N$ with $n, p \in H^1(\Omega)$ and $\psi \in H^1(\Omega)$ being the unique solution of \eqref{eqpsi} with $f = n - p + \varepsilon n_{tr} - D$ and $\ol \psi = 0$.

\smallskip \noindent
\textbf{Step 3.} If we restrict $\varepsilon$ to the interval $\big( 0, \tfrac{1}{2c_2} \big)$, we finally arrive at
\[
E(n, p, n_{tr}, \psi) - E(n_\infty, p_\infty, n_{tr,\infty}, \psi_\infty) \leq (c_1 + 2 c_1 c_2) P(n, p, n_{tr}, \psi)
\]
employing the notation from Step 2.
\end{proof}

\section{Proof of the exponential convergence}
\label{secconv}
As soon as the weak entropy production law \eqref{eqweakeplaw} is settled, the exponential decay of the relative entropy arises from a Gronwall argument as carried out in \cite{FK20} (see also \cite{Wil65,Bee75}).

\begin{proof}[Proof of Theorem \ref{theoremexpconvergence}]
The weak entropy production law \eqref{eqweakeplaw} readily follows from \eqref{eqentropy} and \eqref{eqproduction} for $0 < t_0 \leq t_1 < \infty$ utilizing the regularity and bounds on $n$, $p$, $n_{tr}$, and $\psi$ from Proposition \ref{propglobalsolution}.
The statement of the theorem is then a consequence of Theorem \ref{theoremeepinequality} applied to the global solution $(n, p, n_{tr}, \psi)$. Note, however, that the weak entropy production law \eqref{eqweakeplaw} only allows to derive 
\begin{multline*}
E(n,p,n_{tr},\psi)(t) - E(n_\infty, p_\infty, n_{tr,\infty}, \psi_\infty) \\
\leq (E(n,p,n_{tr},\psi)(t_0) - E(n_\infty, p_\infty, n_{tr,\infty}, \psi_\infty)) e^{-C_{EEP}^{-1} (t-t_0)}
\end{multline*}
for all $t_0 \in (0, t]$. The assertion in \eqref{eqexpconventropy} is then a consequence of the fact that the entropy $E(n,p,n_{tr},\psi)(t_0)$ continuously extends to $t_0 \rightarrow 0$ since $n,p \in C([0,T), L^2(\Omega))$, $n_{tr} \in C([0,T), L^\infty(\Omega))$, and $\psi \in C([0,T), H^2(\Omega))$ for all $T>0$ by Proposition \ref{propglobalsolution}.
\end{proof}

Exponential convergence in $L^\infty$ and $H^2$ for $(n,p,n_{tr})$ and $\psi$, respectively, is a consequence of standard regularity techniques, which have been partially employed already in \cite{FK20}. As a prerequisite, we formulate a Csisz\'ar--Kullback--Pinsker-type inequality, which we believe to be well-known. But as we were not able to find a precise reference, we provide a proof in the subsequent lemma. 

\begin{lemma}[A Csisz\'ar--Kullback--Pinsker-type inequality]
\label{lemmackp}
Let $f, g : \Omega \rightarrow \mathbb R$ be non-negative and measurable functions and $g$ be strictly positive. Then, 
\[
\int_\Omega \bigg( f \ln \frac{f}{g} - f + g \bigg) dx \geq \frac{3}{2 \ol f + 4 \ol g} \| f - g \|_{L^1(\Omega)}^2.
\]
\end{lemma}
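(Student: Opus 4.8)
The plan is to derive the claimed inequality from a scalar pointwise inequality between the integrands, followed by a single application of the Cauchy--Schwarz inequality with a suitably weighted measure.

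\emph{Scalar step.} First I would establish that
\[
a \ln \frac{a}{b} - a + b \geq \frac{3 (a - b)^2}{2a + 4b} \qquad \text{for all } a \geq 0,\ b > 0.
\]
Both sides are positively homogeneous of degree one in $(a,b)$, so for $a > 0$ it suffices to set $t \coleq a/b$ and to prove $\psi(t) \coleq (t \ln t - t + 1)(2t + 4) - 3 (t-1)^2 \geq 0$ on $(0,\infty)$. A direct computation gives $\psi(1) = \psi'(1) = 0$ together with $\psi''(t) = 4 \big( \ln t + \tfrac1t - 1 \big)$, and the latter is non-negative by the elementary bound $\ln t \geq 1 - \tfrac1t$. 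Hence $\psi$ is convex with global minimum $\psi(1) = 0$, so $\psi \geq 0$. The case $a = 0$ is immediate, since then the left-hand side equals $b$ and the right-hand side equals $\tfrac34 b \leq b$; this also explains why it is enough to require $g$ strictly positive while allowing $f$ to vanish.

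\emph{Integration and Cauchy--Schwarz.} Inserting $a = f(x)$, $b = g(x)$ and integrating over $\Omega$ yields
\[
\int_\Omega \Big( f \ln \frac{f}{g} - f + g \Big) dx \geq 3 \int_\Omega \frac{(f - g)^2}{2f + 4g} \, dx.
\]
On the other hand, writing $|f - g| = \frac{|f-g|}{\sqrt{2f + 4g}} \cdot \sqrt{2f + 4g}$ and applying the Cauchy--Schwarz inequality gives
\[
\| f - g \|_{L^1(\Omega)}^2 \leq \Big( \int_\Omega \frac{(f-g)^2}{2f + 4g} \, dx \Big) \int_\Omega (2f + 4g) \, dx = \big( 2 \ol f + 4 \ol g \big) \int_\Omega \frac{(f-g)^2}{2f + 4g} \, dx.
\]
Combining the two displays immediately gives the assertion.

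The only step that is not entirely routine is identifying, in the scalar inequality, the correct affine weight $2a + 4b$: it is chosen so that $\int_\Omega (2f + 4g)\,dx = 2\ol f + 4\ol g$ reproduces exactly the denominator appearing in the statement, while simultaneously keeping the scalar inequality true with numerator constant $3$. Once this weight is fixed, the scalar step is a one-variable convexity argument and the rest is standard; in particular no normalisation of $f$ or $g$ is needed, in contrast to the classical Csisz\'ar--Kullback--Pinsker inequality for probability densities.
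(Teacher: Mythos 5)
Your proposal is correct and follows essentially the same route as the paper: the identical scalar inequality $(2t+4)(t\ln t - t + 1) \geq 3(t-1)^2$ (which the paper verifies via the sign of the third derivative rather than your slightly cleaner convexity argument $\psi''(t)=4(\ln t + \tfrac1t - 1)\geq 0$), followed by the same Cauchy--Schwarz step with the weight $2f+4g$. The reorganisation of integrating the pointwise bound first versus applying H\"older directly to $\|f-g\|_{L^1}$ is purely cosmetic.
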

\begin{proof}
Going back to an idea of Pinsker, we first prove the elementary inequality $h(u) \coleq (2u+4)(u \ln u - u + 1) - 3(u-1)^2 \geq 0$ for scalar $u \geq 0$. The claim follows from the identities $h(1) = h'(1) = h''(1) = 0$ and the sign of $h'''(u) = \tfrac{4}{u} - \tfrac{4}{u^2}$ for $u > 1$ and $u < 1$, respectively. As a consequence, we obtain 
\begin{align*}
\| f - g \|_{L^1(\Omega)} &= \int_\Omega \bigg| \frac{f}{g} - 1 \bigg| g \, dx \leq \int_\Omega \frac{g}{\sqrt{3}} \sqrt{\frac{2f}{g} + 4} \sqrt{\frac{f}{g} \ln \frac{f}{g} - \frac{f}{g} + 1} \, dx \\ 
&\leq \frac{1}{\sqrt{3}} \sqrt{\int_\Omega (2f + 4g) \, dx} \sqrt{\int_\Omega \Big( f \ln \frac{f}{g} - f + g \Big) \, dx},
\end{align*}
where we employed H\"older's inequality in the last step.
\end{proof}

\begin{proof}[Proof of Corollary \ref{corconvlinfty}]
An immediate consequence of the exponential decay of the relative entropy as stated in \eqref{eqexpconventropy} is the exponential convergence to the equilibrium of $n(t)$ and $p(t)$ in $L^1(\Omega)$, and of $n_{tr}(t)$ in $L^2(\Omega)$. To see this, we first recall the explicit representation of the relative entropy from Lemma \ref{lemmarelativeentropy}. 
Lemma \ref{lemmackp} allows us to control
\[
\int_\Omega \left( n \ln \frac{n}{n_\infty} - (n - n_\infty) \right) dx \geq \frac{3}{2 \ol n + 4 \ol{n_\infty}} \| n - n_\infty \|_{L^1(\Omega)}^2 \geq c \| n - n_\infty \|_{L^1(\Omega)}^2
\]
and analogously $\| p - p_\infty \|_{L^1(\Omega)}^2$ in terms of a (rough) constant $c(M, K_\infty) > 0$. Next, we notice that
\[
\frac{d}{ds} \ln \left( \frac{s}{1-s} \right) = \frac{1}{s(1-s)} \geq 4
\]
is valid for all $s \in (0,1)$, which enables us to estimate 
\begin{multline*}
\varepsilon \int_\Omega \int_{n_{tr,\infty}}^{n_{tr}} \left( \ln \left( \frac{s}{1-s} \right) - \ln \left( \frac{n_{tr,\infty}}{1 - n_{tr,\infty}} \right) \right) ds \, dx \\
= \varepsilon \int_\Omega \int_{n_{tr,\infty}}^{n_{tr}} \frac{1}{\sigma(s)(1 - \sigma(s))} (s - n_{tr,\infty}) \, ds \, dx 
\geq 2 \varepsilon  \| n_{tr} - n_{tr,\infty} \|_{L^2(\Omega)}^2
\end{multline*}
where $\sigma(s)$ serves as an intermediate point between $n_{tr,\infty}$ and $s$.

As a preparation for the exponential convergence of $n$ and $p$ in $L^\infty(\Omega)$, we adapt an argument from \cite{FK20} to establish a polynomially growing $W^{1,q}(\Omega)$ bound on $n$ for $q \geq 4$. (In fact, we shall only require such a bound for $q = 6$ below.) The same technique is also applicable to $p$. As in the proof of Proposition \ref{propglobalsolution}, we 
set w.l.o.g.\ $\tau_n = \tau_p = 1$ leading to 
\begin{align*}
\partial_t n = \Delta n + \nabla n \cdot \nabla (\psi + V_n) + n \Delta (\psi + V_n) + n_{tr} - \frac{n}{n_0 \mu_n} (1-n_{tr}).
\end{align*}
Using $-|\nabla n|^{q-2} \Delta n$ as a test function and recalling $\hat n \cdot \nabla n = 0$ on $\partial \Omega$ entails
\begin{multline*}
\frac1{q(q-1)} \frac{d}{dt} \int_\Omega |\nabla n|^q \, dx = \frac1{q-1} \int_\Omega |\nabla n|^{q-2} \nabla n \cdot \nabla \partial_t n \, dx = - \int_\Omega |\nabla n|^{q-2} \Delta n \, \partial_t n \, dx \\
= - \int_\Omega |\nabla n|^{q-2} |\Delta n|^2 \, dx - \int_\Omega |\nabla n|^{q-2} \Delta n \nabla n \cdot \nabla (\psi + V_n) \, dx \\
- \int_\Omega |\nabla n|^{q-2} \Delta n \, n \Delta (\psi + V_n) \, dx - \int_\Omega |\nabla n|^{q-2} \Delta n \Big(n_{tr} - \frac{n}{n_0 \mu_n} (1 - n_{tr}) \Big) \, dx.
\end{multline*}
By estimating the third line with Young's inequality via 
\[
\Big| \Delta n \, n \Delta (\psi + V_n) + \Delta n \Big(n_{tr} - \frac{n}{n_0 \mu_n} (1 - n_{tr}) \Big) \Big| \leq \frac12 |\Delta n|^2 + \frac12 C_2^2
\]
with a constant $C_2(M) > 0$, and by observing that 
\begin{multline*}
\Big| \int_\Omega |\nabla n|^{q-2} \Delta n \nabla n \cdot \nabla (\psi + V_n) \, dx \Big| = \Big| \int_\Omega \frac{1}{q} \nabla \big( |\nabla n|^q \big) \cdot \nabla (\psi + V_n) \, dx \Big| \\ 
\leq \frac{1}{q} \int_\Omega |\nabla n|^q \big| \Delta (\psi + V_n) \big| \, dx \leq \frac1q \int_\Omega |\nabla n|^q \, C_1 \, dx 
\end{multline*}
with another constant $C_1(M)>0$, we calculate 
\begin{multline*}
\frac1{q(q-1)} \frac{d}{dt} \int_\Omega |\nabla n|^q \, dx 
\leq - \int_\Omega |\nabla n|^{q-2} |\Delta n|^2 \, dx \\
+ \frac1q \int_\Omega |\nabla n|^q \, C_1 \, dx + \int_\Omega |\nabla n|^{q-2} \Big( \frac12 |\Delta n|^2 + \frac12 C_2^2 \Big) \, dx.
\end{multline*}
We rewrite the first term in the second line by another integration by parts and Young's inequality, which leads us to 
\begin{align*}
\frac1q \int_\Omega |\nabla n|^q \, dx &= \frac1q \int_\Omega |\nabla n|^{q-2} \nabla n \cdot \nabla n \, dx 
= - \frac{q-1}{q} \int_\Omega |\nabla n|^{q-2} \Delta n \, n \, dx \\
&\leq \frac{1}{2C_1} \int_\Omega |\nabla n|^{q-2} |\Delta n|^2 \, dx + \frac{C_1M^2}{2} \int_\Omega |\nabla n|^{q-2} \, dx.
\end{align*}
The previous estimates now guarantee that
\[
\frac{d}{dt} \int_\Omega |\nabla n|^q \, dx \leq C_3 \int_\Omega |\nabla n|^{q-2} \, dx,
\]
with a constant $C_3(M, q) > 0$. Choosing $t_0 > 0$ and $t \geq t_0$ arbitrarily and utilizing $|\Omega| = 1$, one has
\[
\| \nabla n(t) \|_{L^q(\Omega)}^q \leq \| \nabla n(t_0) \|_{L^q(\Omega)}^q + C_3 \int_{t_0}^t \| \nabla n(s) \|_{L^q(\Omega)}^{q-2} \, ds.
\]
The polynomial growth of $\| \nabla n \|_{L^q(\Omega)}$ is then obtained by an elementary Gronwall lemma (see e.g. \cite{Bee75}). In detail, 
\begin{align}
\label{eqboundgradn}
\| \nabla n(t) \|_{L^q(\Omega)} \leq \Big( \| \nabla n(t_0) \|_{L^q(\Omega)}^2 + C_3 (t-t_0) \Big)^{\frac12}.
\end{align}

Exponential convergence to the equilibrium for $n$ in $L^q(\Omega)$, $1 < q < \infty$, is easily deduced from the exponential convergence of $n$ in $L^1(\Omega)$ as settled above and the $L^\infty(\Omega)$ bounds on $n$ and $n_\infty$ in \eqref{equpperboundnp} and \eqref{eqeqbounds}, respectively, by writing
\[
\| n - n_\infty \|_{L^q(\Omega)}^q \leq \| n - n_\infty \|_{L^\infty(\Omega)}^{q-1} \| n - n_\infty \|_{L^1(\Omega)} \leq C e^{-c t}
\]
where the constants $c(M, K_\infty, q), C(M, K_\infty, q) > 0$ are independent of $\varepsilon$ for $\varepsilon \in (0, \varepsilon_0]$. For $q = 2$ and together with the same bound on $p$ and the estimate 
\[
\| \psi - \psi_\infty \|_{H^2(\Omega)} \leq C \big( \| n - n_\infty \|_{L^2(\Omega)} + \| p - p_\infty \|_{L^2(\Omega)} + \varepsilon \| n_{tr} - n_{tr,\infty} \|_{L^2(\Omega)} \big),
\]
this directly implies the exponential convergence of $\psi$ in $H^2(\Omega)$.
The Gagliardo--Nirenberg--Moser interpolation inequality now allows us to infer exponential convergence of $n$ and $p$ in $L^\infty(\Omega)$. In fact, the bound on $\| \nabla n \|_{L^{6}(\Omega)}$ in \eqref{eqboundgradn} entails
\begin{equation}
\label{eqconvlinfty}
\|n-n_\infty\|_{L^{\infty}(\Omega)} \le C \| n-n_\infty \|_{W^{1,6}(\Omega)}^{\frac12} \| n-n_\infty \|_{L^{6}(\Omega)}^{\frac12} \leq C e^{-c t},
\end{equation}
with constants $c(M, K_\infty), C(M, K_\infty) > 0$. 

The exponential convergence of $n_{tr}$ in $L^\infty(\Omega)$ can be verified essentially along the same lines as in \cite{FK20}. We, therefore, omit some technical details and set w.l.o.g.\ $\tau_n = \tau_p = 1$. By defining $u \coleq n_{tr} - n_{tr,\infty}$, one derives the following pointwise relation by inserting $\pm n_{tr,\infty}$ several times and by applying the identities from \eqref{eqntrrelations}:
\begin{align*}
\varepsilon \, \partial_t u &= R_p - R_n = \bigg( 1 - n_{tr} - \frac{p}{p_0 \mu_p} n_{tr} \bigg) - \bigg( n_{tr} - \frac{n}{n_0 \mu_n} (1 - n_{tr}) \bigg) \\
&= - u \left(2 + \frac{p_\infty}{p_0 \mu_p} + \frac{n_\infty}{n_0 \mu_n}\right) - \frac{n_{tr}}{p_0 \mu_p} \left(p-p_\infty\right) + \frac{(1 - n_{tr})}{n_0 \mu_n} \left(n-n_\infty\right).
\end{align*}
By recalling $0 \leq n_{tr} \leq 1$, $\mu_n = e^{-V_n}$, and $\mu_p = e^{-V_p}$ with $V_n, V_p \in L^\infty(\Omega)$, and by employing \eqref{eqconvlinfty}, we find 
\begin{align*}
\frac{d}{dt} \|u(t, \cdot)\|_{L^\infty(\Omega)} 
\leq - \frac{2}{\varepsilon} \|u(t, \cdot)\|_{L^\infty(\Omega)} + \frac{C}{\varepsilon} e^{-c t}
\end{align*}
where $c, C > 0$ depend on $M$ and $K_\infty$ but not on $\varepsilon$ for $\varepsilon \in (0, \varepsilon_0]$. If we choose $c > 0$ sufficiently small satisfying $\varepsilon_0 c \leq 1$, we arrive at
\begin{multline*}
\|n_{tr}(t, \cdot) - n_{tr,\infty} \|_{L^\infty(\Omega)} \le e^{-2 t/\varepsilon} + \frac{C}{\varepsilon} \int_0^t e^{ -2 (t-s) /\varepsilon - c s}\,ds \\
\le e^{-2 t/\varepsilon} + e^{-2 t /\varepsilon} \frac{C}{2 - \varepsilon c} \left( e^{ (2/\varepsilon - c)t}  - 1\right)
\le e^{-2 t/\varepsilon_0} + C e^{ - c t}.
\end{multline*}
Finally, estimate \eqref{eqexpconvlinfty} is proven.
\end{proof}

\section{Conclusion and Outlook}
We have derived a so-called entropy--entropy production (EEP) inequality for a recombination--drift--diffusion--Poisson system modelling the dynamics of electrons and holes on separate energy levels in semiconductor materials. We have then employed this EEP inequality to establish exponential convergence to the equilibrium for the densities of the involved charge carriers and the self-consistent electrostatic potential. However, several simplifying hypotheses have been imposed (cf. Assumption \ref{assump}), which allowed for a transparent presentation of the main ideas of the proof but which prevent one from directly applying the results to real-world semiconductor devices. 

We conclude the article with a couple of comments on possible future research.
\begin{itemize}
	\item Instead of considering trapped states allowing for a limited number of electrons, one can---at least from a mathematical perspective---also consider trapped states attracting holes. This could be achieved by replacing $n_{tr}$ by the density of \emph{trapped holes} $p_{tr}$, and by appropriately reformulating model \eqref{eqsystem}. As the structure of the resulting system remains essentially unchanged, we believe that our results also transfer to this situation. 
	\item Concerning the presence of multiple trap levels or even a continuous distribution of trap levels within the bandgap of the semiconductor as in \cite{GMS07}, we stress that this requires a different definition of the entropy functional involving the density $n_{tr}^\eta$ of occupied trapped states with energy $\eta \in [E_\mathrm{min}, E_\mathrm{max}]$. Assuming all constants to be independent of $\eta$ (which also enforces the equilibria $n_{tr,\infty}^\eta$ to coincide), one can redefine the entropy functional as  
	\begin{align}
	\label{eqentropynew}
	E(n, p, \{n_{tr}^\eta\}_\eta, \psi) &\coleq \int_{\Omega} \bigg( n \ln \frac{n}{n_0 \mu_n} - (n-n_0\mu_n) + p \ln \frac{p}{p_0 \mu_p} - (p-p_0\mu_p) \\ &\qquad \quad + \frac{\lambda}{2} \big| \nabla \psi \big|^2 + \varepsilon \int_{E_\mathrm{min}}^{E_\mathrm{max}} \int_{1/2}^{n_{tr}^\eta} \ln \left( \frac{s}{1-s} \right) ds \, d\eta \bigg) dx. \nonumber
	\end{align}
	In this context, our strategy leads to the same results by following the line of arguments with some minor adaptions. But as soon as $n_0$ and $p_0$ depend on $\eta$, it already seems to be infeasible to derive an expression for the entropy production $P$ similar to \eqref{eqproduction} (at least if the integral over $\eta$ is placed in front of the right hand side of \eqref{eqentropynew}). Further studies in this direction shall be carried out in a subsequent project.
	\item In order to generalise our framework to the setting of semiconductor devices subject to non-trivial boundary conditions and space-dependent material parameters, the following questions have to be addressed: How to prove the existence of global solutions (observing that heterostructures are not covered by \cite{GMS07})? How to incorporate boundary conditions into the entropy functional preserving the non-negativity of $E$ and $P$ along with $P = -\tfrac{d}{dt} E$? Moreover, the fluxes $J_n$, $J_p$ and the reactions $R_n$, $R_p$ are now typically non-zero in the equilibrium. Is it, therefore, reasonable to expect relative fluxes and relative reactions to appear in the entropy production? To our knowledge, questions of equilibration of such models are largely open. 
\end{itemize}

\bibliographystyle{unsrt}
\bibliography{TrappedStatesSelfCons}

\end{document}